\definecolor {processblue}{cmyk}{0.96,0,0,0}
  \newtheorem{The}{Theorem}[section]
  \newtheorem{Pro}[The]{Proposition}
  \newtheorem{Cor}[The]{Corollary}
  \newtheorem{Def}[The]{Definition}
  \newtheorem{Rem}[The]{Remark}
\newcommand{\bsm}{\begin{smallmatrix}}
\newcommand{\esm}{\end{smallmatrix}}
\newcommand{\bbm}{\begin{matrix}}
\newcommand{\ebm}{\end{matrix}}
\newcommand{\Hom}{\rm{Hom}}
\theoremstyle{definition}
\theoremstyle{plain}
\theoremstyle{definition}
\numberwithin{equation}{section}
\begin{document}

\title{The rigidity of filtered colimits of $n$-cluster tilting subcategories}
\newcommand\shortTitle{Finiteness for n-cluster tilting subcategories of the module category}
\author{Ziba Fazelpour}
\address{Department of Pure Mathematics\\
Faculty of Mathematics and Statistics\\
University of Isfahan\\
P.O. Box: 81746-73441, Isfahan, Iran}
\email{z.fazelpour@ipm.ir}
\author{Alireza Nasr-Isfahani}
\address{Department of Pure Mathematics\\
Faculty of Mathematics and Statistics\\
University of Isfahan\\
P.O. Box: 81746-73441, Isfahan, Iran\\ and School of Mathematics, Institute for Research in Fundamental Sciences (IPM), P.O. Box: 19395-5746, Tehran, Iran}
\email{nasr$_{-}$a@sci.ui.ac.ir / nasr@ipm.ir}

\subjclass[2010]{{16E30}, {16G10}, {18E99}}

\keywords{$n$-cluster tilting subcategory, vanishing of Ext, Functor ring.}

\maketitle

\begin{abstract}
Let $\Lambda$ be an artin algebra and $\mathcal{M}$ be an $n$-cluster tilting subcategory of $\Lambda$-mod with $n \geq 2$.  From the viewpoint of higher homological algebra, a question that naturally arose in \cite{en1} is when $\mathcal{M}$ induces an $n$-cluster tilting subcategory of $\Lambda$-Mod. In this paper, we answer this question and explore its connection to Iyama’s question on the finiteness of $n$-cluster tilting subcategories of $\Lambda$-mod. In fact, our theorem reformulates Iyama’s question in terms of the vanishing of Ext; and highlights its relation with the rigidity of filtered colimits of $\mathcal{M}$. Also, we show that ${\rm Add}(\mathcal{M})$ is an $n$-cluster tilting subcategory of $\Lambda$-Mod if and only if ${\rm Add}(\mathcal{M})$ is a maximal $n$-rigid subcategory of $\Lambda$-Mod if and only if $\lbrace X\in  \Lambda$-{\rm Mod}$~|~ {\rm Ext}^i_{\Lambda}(\mathcal{M},X)=0 ~~~ {\rm for ~all}~ 0<i<n \rbrace \subseteq {\rm Add}(\mathcal{M})$ if and only if $\mathcal{M}$ is of finite type if and only if ${\rm Ext}_{\Lambda}^1({\underrightarrow{\lim}}\mathcal{M}, {\underrightarrow{\lim}}\mathcal{M})=0$. Moreover, we present several equivalent conditions for the Iyama’s question which shows the relation of the Iyama's question with different subjects in representation theory such as purity and covering theory. 
\end{abstract}

\section{Introduction}

One of the central goals in the representation theory of finite dimensional algebras is to classify finitely generated indecomposable  modules up to isomorphism and morphisms between them. But this is considered as "impossible" in general because most algebras have wild representation type. In the modern representation theory of algebras, the main approach to understanding the structure of the module category is to investigate subcategories of the module category having good properties. 

\vspace{2mm}
In higher-dimensional Auslander-Reiten theory, as introduced by Osamu Iyama in \cite{I3, I2}, the module category is replaced by a subcategory with suitable homological properties known as an $n$-cluster tilting subcategory, where $n$ is a positive integer. Note that $\Lambda$-mod itself is the unique 1-cluster tilting subcategory of $\Lambda$-mod. If an $n$-cluster tilting subcategory $\mathcal{M}$ admits an additive generator, then $\mathcal{M}$ is called of finite type. The question of finiteness of $n$-cluster tilting subcategories for $n \geq 2$, which is among the first that have been asked by Iyama \cite{I3}, is still open. Up to now, all known $n$-cluster tilting subcategories with $n \geq 2$ are of finite type. Several equivalence conditions for finiteness of $n$-cluster tilting subcategories are given in \cite{dn, dn1, en, en1, fin}.

Let $\mathcal{M}$ be an $n$-cluster tilting subcategory of $\Lambda$-mod with $n \geq 2$. It is natural to ask the following question: when does $\mathcal{M}$ induce an $n$-cluster tilting subcategory in $\Lambda$-Mod? Ebrahimi and the second author in \cite{en, en1} investigated this question and its relationship to the Iyama's question. They showed that ${\rm Add}(\mathcal{M})$ is an $n$-cluster tilting subcategory of $\Lambda$-Mod if and only if $\mathcal{M}$ is of finite type (see \cite[Theorem 4.12]{en}). For an $n$-cluster tilting subcategory $\mathcal{M}$ of $\Lambda$-mod,  ${\underrightarrow{\lim}}\mathcal{M}$, consists of all filtered colimits of objects from $\mathcal{M}$, is a generating-cogenerating functorially finite subcategory of $\Lambda$-Mod. Hence the following question, as has been posited in \cite[Question 3.5]{en1}, naturally arises.

\vspace{1mm}
\textbf{Question A:} Let $\mathcal{M}$ be an $n$-cluster tilting subcategory of $\Lambda$-mod with $n \geq 2$. Is ${\underrightarrow{\lim}}\mathcal{M}$ an $n$-rigid subcategory of $\Lambda$-Mod?

\vspace{1mm}
They proved ${\underrightarrow{\lim}}\mathcal{M}$ is the unique $n$-cluster tilting subcategory of $\Lambda$-Mod containing $\mathcal{M}$ if Question A has positive answer (see \cite[Proposition 4.1]{en1}). In the case $n=2$, employing cotorsion theory, they showed Question A is equivalent to the Iyama’s Question (see \cite[Theorem 4.10]{en1}).  

\vspace{2mm}
In this paper, we study these questions and their connection to the Iyama's question. First, we focus on the subcategory ${\rm Add}(\mathcal{M})$ of $\Lambda$-Mod and we prove the following theorem.

\vspace{1mm}
\textbf{Theorem A.} (Theorem \ref{3.1}) Let $\mathcal{M}$ be an $n$-cluster tilting subcategory of $\Lambda$-mod with $n \geq 2$. Then the following statements are equivalent.\vspace{-1mm}

\begin{itemize}
\item[$(a)$] $\mathcal{M}$ is of finite type.
\item[$(b)$] $\lbrace X\in  \Lambda$-{\rm Mod}$~|~ {\rm Ext}^i_{\Lambda}(\mathcal{M},X)=0 ~~~ {\rm for ~all}~ 0<i<n \rbrace = {\rm Add}(\mathcal{M})$.
\item[$(c)$] $\lbrace X\in  \Lambda$-{\rm Mod}$~|~ {\rm Ext}^i_{\Lambda}(\mathcal{M},X)=0 ~~~ {\rm for ~all}~ 0<i<n \rbrace \subseteq {\rm Add}(\mathcal{M})$.
\item[$(d)$] ${\rm Add}(\mathcal{M})$ is an $n$-cluster tilting subcategory of $\Lambda$-{\rm Mod}.
\end{itemize}

Next we show that Question A is equivalent to the Iyama's question when $n \geq 2$. In fact, we prove the following theorem.\\

\textbf{Theorem B.} (Theorem \ref{2.2}) Let $\mathcal{M}$ be an $n$-cluster tilting subcategory of $\Lambda$-mod with $n \geq 2$. Then the following statements are equivalent.\vspace{-1mm}
\begin{itemize}
\item[$(a)$] $\mathcal{M}$ is of finite type.
\item[$(b)$] ${\underrightarrow{\lim}}\mathcal{M}$ is an $n$-cluster tilting subcategory of $\Lambda$-{\rm Mod}.
\item[$(c)$] ${\underrightarrow{\lim}}\mathcal{M}$ is an $n$-rigid subcategory of $\Lambda$-{\rm Mod}.
\item[$(d)$] ${\rm Ext}_{\Lambda}^1({\underrightarrow{\lim}}\mathcal{M}, {\underrightarrow{\lim}}\mathcal{M})=0$.
\item[$(e)$] Every pure submodule $N$ of a module $X$ in ${\rm Add}(\mathcal{M})$ is a direct summand of $X$.
\item[$(f)$] ${\underrightarrow{\lim}}\mathcal{M}$ is pure semisimple.
\end{itemize}

Mittag-Leffler modules occur naturally in algebra, algebraic geometry, and model theory (see \cite{gro, rg, mp}). Mittag-Leffler modules play a prominent role in the Simson’s characterization of perfect functor categories (see \cite[Theorem 6.3]{s77} and see also \cite[Theorem 8]{af}). We study the subcategory of all  Mittag-Leffler modules in ${\underrightarrow{\lim}} \mathcal{M}$ when $\mathcal{M}$ is an $n$-cluster tilting subcategory of $\Lambda$-mod with $n \geq 2$. As a consequence we show the assumption that every module in ${\underrightarrow{\lim}} \mathcal{M}$ is Mittag-Leffler guarantees that $\mathcal{M}$ is of finite type (see Corollary \ref{2.3}).

\subsection{Notation }
Throughout this paper all rings are associative with unit unless otherwise stated and $n$ always denotes a fixed positive integer. Also we assume that $\Lambda$ is an artin algebra. Moreover, we write homomorphisms on the right. Let $T$ be a ring, not necessary with unit. We denote by $T$-Mod (resp., $T$-mod) the category of all  (finitely generated) left $T$-modules. We write $\prod_{i \in I}U_i$ (resp., $\bigoplus_{i \in I}U_i$) for the direct product (resp., direct coproduct) of the left $T$-modules $U_i$ in $T$-Mod. A left (resp., right) $T$-module $M$ is called {\it unitary} if $TM=M$ (resp., $MT=M$). We denote by $T$Mod (resp., Mod$T$) the category of all unitary left (resp., right) $T$-modules. Also, we denote by ${\rm Proj}(T)$ (resp., ${\rm proj}(T)$) the full subcategory of $T$Mod consisting of (resp., finitely generated) unitary left $T$-modules which are projective in $T$Mod.  Let $\mathcal{M}$ be an additive full subcategory of $\Lambda$-mod. We denote by ${\rm Add}(\mathcal{M})$ (resp., ${\rm add}(\mathcal{M})$) the full subcategory of $\Lambda$-Mod (resp., $\Lambda$-mod) whose objects are direct summands of (resp., finite) direct sums of objects in $\mathcal{M}$. Also, we denote by ${\underrightarrow{\lim}} \mathcal{M}$ the full subcategory of $\Lambda$-Mod in which each of its objects is a direct limit of objects in $\mathcal{M}$.

\section{Preliminaries} 

Throughout this section, we fix a positive integer $n$. First, we recall the definition of $n$-cluster tilting subcategories of abelian categories. Then, we investigate the weak global dimension of (Gabriel) functor rings of n-cluster tilting subcategories of module categories, generalizing results from Fuller \cite[Proposition 1.5]{f} and Auslander \cite[Proposition 4.2]{Ausla2} (see Theorem \ref{1.2}). By using this outcome, we prove some basic properties of the subcategory of filtered colimit ${\underrightarrow{\lim}}\mathcal{M}$ of $n$-cluster tilting subcategory $\mathcal{M}$ of $\Lambda$-mod which are crucial in our investigation.

Before presenting the definition of the $n$-cluster tilting subcategories of arbitrary abelian categories, let us recall some notions.\\

Let $\mathcal{M}$ be a full subcategory of an abelian category $\mathcal{A}$. We recall that $\mathcal{M}$ is {\it generating} if any object in $\mathcal{A}$ is a quotient of an object in $\mathcal{M}$; that is, for every object $A \in \mathcal{A}$, there exists an exact sequence $M \rightarrow A \rightarrow 0$ with $M \in \mathcal{M}$. {\it Cogenerating subcategories} are defined dually. $\mathcal{M}$ is called a {\it generating-cogenerating subcategory} of $\mathcal{A}$ if it is both generating and cogenerating. 

Given an object $A \in \mathcal{A}$, a morphism $\theta: M \rightarrow A$ with $M \in \mathcal{M}$ is called a {\it right $\mathcal{M}$-approximation of $A$} if any morphism from an object in $\mathcal{M}$ to $A$ factors through $\theta$. The subcategory $\mathcal{M}$ is called {\it contravariantly finite} if every object in $\mathcal{A}$ admits a right $\mathcal{M}$-approximation. Left $\mathcal{M}$-approximations and covariantly finite subcategories are defined dually. Also $\mathcal{M}$ is called {\it functorially finite} if it is both contravariantly and covariantly finite. We refer the reader to \cite{ar, as} for more details on functorially finite subcategories.

\begin{Def}{\rm (See }\cite[Definition 3.14]{J}{\rm )}
{\rm Let $\mathcal{A}$ be an abelian category and $\mathcal{M}$ be a generating-cogenerating full subcategory of $\mathcal{A}$. The subcategory $\mathcal{M}$ of $\mathcal{A}$ is called {\it $n$-cluster tilting} if it is functorially finite in $\mathcal{A}$ and $\mathcal{M}^{\bot_n}=\mathcal{M}={^{\bot_n}\mathcal{M}}$, where
\begin{center}
${^{\bot_n}\mathcal{M}}=\lbrace X\in \mathcal{A}~|~ {\rm Ext}^i_{\mathcal{A}}(X,\mathcal{M})=0 ~~~ {\rm for ~all }~ 0<i<n \rbrace$\\
$\mathcal{M}^{\bot_n}=\lbrace Y\in  \mathcal{A}~|~ {\rm Ext}^i_{\mathcal{A}}(\mathcal{M},Y)=0 ~~~ {\rm for ~all}~ 0<i<n \rbrace$.
\end{center}
Note that $\mathcal{A}$ itself is the unique 1-cluster tilting subcategory of $\mathcal{A}$.} 
\end{Def}

\noindent
Let $\mathcal{M}$ be an $n$-cluster tilting subcategory of $\Lambda$-mod. Assume that $\lbrace U_{\alpha}~|~ \alpha \in J \rbrace$ is a complete set of non-isomorphic indecomposable modules in $\mathcal{M}$. Set $U=\bigoplus_{\alpha \in J} U_{\alpha}$ and for each $\alpha \in J$ let $e_{\alpha}=\pi_{\alpha} \varepsilon_{\alpha}$, where $\pi_{\alpha}:U \rightarrow U_{\alpha}$ is the canonical projection and $\varepsilon_{\alpha}:U_{\alpha} \rightarrow U$ is the canonical injection. For each object $X$ of $\Lambda$-Mod, we define as in \cite[p. 242]{men},  $$\widehat{{\Hom}}_{\Lambda}(U,X)=\lbrace f \in {\rm Hom}_{\Lambda}(U,X)~|~ e_{\alpha}f =0~ {\rm for ~almost~ all}~ \alpha \in J \rbrace.$$  For $X=U$, we write $T:= \widehat{{\rm Hom}}_{\Lambda}(U, U)=\widehat{{\rm End}}_{\Lambda}(U)$. Following \cite[p. 40]{fu} (see also \cite[p. 370]{y}), $T=\widehat{{\rm End}}_{\Lambda}(U)$ is called {\it functor ring {\rm(or} Gabriel ring{\rm )} of $\mathcal{M}$}. 

A ring $S$ (not necessary with unit) has {\it enough idempotents} if there exists a family $\lbrace q_{\alpha}~|~\alpha \in I \rbrace$ of pairwise orthogonal idempotents of $S$ such that $S=\bigoplus_{\alpha \in I}Sq_{\alpha}=\bigoplus_{\alpha \in I}q_{\alpha}S$ (see \cite[p. 39]{fu}). From \cite[Proposition 2.2(6)]{men}, we can see that $T=\bigoplus_{\alpha \in J}Te_{\alpha}=\bigoplus_{\alpha \in J}e_{\alpha}T$ is a ring with enough idempotents. Moreover, the assignment $X \mapsto \widehat{{\Hom}}_{\Lambda}(U,X)$ defines a covariant left exact functor $\widehat{{\Hom}}_{\Lambda}(U,-): \Lambda$-Mod$ \rightarrow T$Mod (see \cite[Proposition 2.2(5)]{men}). The functor $\widehat{{\Hom}}_{\Lambda}(U,-): \Lambda$-Mod $\rightarrow T$Mod preserves coproducts and induces an additive equivalence between the full subcategory ${\rm Add}(\mathcal{M})$ of $\Lambda$-Mod and the full subcategory  ${\rm Proj}(T)$ of $T$Mod with the inverse equivalence $U \otimes_T -$ (see also \cite[p. 40-41]{fu}). Hence, we can see that $\widehat{{\Hom}}_{\Lambda}(U,-): \mathcal{M} \rightarrow {\rm proj}(T)$ is an additive equivalence with the inverse equivalence $U \otimes_{T}-$. By \cite[Theorem 3.6]{zn}, the functor ring of $\mathcal{M}$ uniquely determines up to Morita equivalence (see also the proof of \cite[Theorem 3.3]{gs}).\\

Let $S$ be a ring with enough idempotents. As usual, the projective dimension of a unitary left $S$-module $X$ which is denoted by ${\rm pd}(X)$, is the infimum of the integers $d$ such that there exists an exact sequence $$0 \rightarrow P_d \rightarrow P_{d-1} \rightarrow  \cdots \rightarrow P_0 \rightarrow X \rightarrow 0$$ with each $P_i$ is a projective unitary left $S$-module (see \cite[p. 208]{ai}).

\begin{Pro}\label{1.2.2}
Let $\mathcal{M}$ be an $n$-cluster tilting subcategory of $\Lambda$-mod and $T$ be the functor ring of $\mathcal{M}$. Then the projective dimension of the kernel of any morphism in ${\rm proj}(T)$ is less than or equal to $n-1$.
\end{Pro}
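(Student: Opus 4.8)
\emph{The plan} is to transport the problem across the additive equivalence $\widehat{\Hom}_{\Lambda}(U,-)\colon \mathcal{M}\to {\rm proj}(T)$ and then resolve a kernel computed inside $\Lambda\text{-mod}$. Every morphism $f$ in ${\rm proj}(T)$ is, via this equivalence (whose inverse is $U\otimes_T-$), of the form $f=\widehat{\Hom}_{\Lambda}(U,g)$ for some $g\colon M\to N$ in $\mathcal{M}$. Since the functor $\widehat{\Hom}_{\Lambda}(U,-)\colon \Lambda\text{-Mod}\to T{\rm Mod}$ is left exact, I would first record that $\Ker f=\widehat{\Hom}_{\Lambda}(U,K)$, where $K=\Ker_{\Lambda} g\in\Lambda\text{-mod}$. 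It therefore suffices to exhibit a projective resolution of the $T$-module $\widehat{\Hom}_{\Lambda}(U,K)$ of length at most $n-1$.

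Next I would resolve $K$ inside $\Lambda\text{-mod}$ by objects of $\mathcal{M}$. As $\mathcal{M}$ is contravariantly finite and generating, it admits surjective right $\mathcal{M}$-approximations, yielding short exact sequences $0\to K_{j+1}\to M_j\to K_j\to 0$ with $M_j\in\mathcal{M}$ and $K_0=K$; here $K_j$ is the $j$-th $\mathcal{M}$-syzygy of $K$. The heart of the argument is to show that the $(n-1)$-st syzygy already lies in $\mathcal{M}$, and for this I would use the two defining features of $\mathcal{M}$ separately. First, for each $j$ the approximation property says precisely that ${\Hom}_{\Lambda}(L,M_j)\to{\Hom}_{\Lambda}(L,K_j)$ is surjective for every $L\in\mathcal{M}$; feeding this into the long exact sequence of $0\to K_{j+1}\to M_j\to K_j\to 0$ and using ${\Ext}^1_{\Lambda}(L,M_j)=0$ (valid since $n\ge 2$) gives ${\Ext}^1_{\Lambda}(\mathcal{M},K_j)=0$ for all $j\ge 1$. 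Then, by induction on $i$, the dimension shift along these same short exact sequences together with ${\Ext}^{i}_{\Lambda}(\mathcal{M},M_j)=0$ for $0<i<n$ upgrades this to ${\Ext}^{i}_{\Lambda}(\mathcal{M},K_j)=0$ for all $1\le i\le n-1$ and all $j\ge i$. Taking $j=n-1$ yields ${\Ext}^{i}_{\Lambda}(\mathcal{M},K_{n-1})=0$ for $0<i<n$, whence $K_{n-1}\in\mathcal{M}^{\bot_n}=\mathcal{M}$.

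Consequently $K$ admits a finite $\mathcal{M}$-resolution $0\to K_{n-1}\to M_{n-2}\to\cdots\to M_0\to K\to 0$ with every term in $\mathcal{M}$. Finally I would apply $\widehat{\Hom}_{\Lambda}(U,-)$ to this sequence: left exactness handles the injective maps, while the surjectivity of ${\Hom}_{\Lambda}(U_{\alpha},M_j)\to{\Hom}_{\Lambda}(U_{\alpha},K_j)$ (the approximation property again, now for the generators $U_{\alpha}$) keeps each short exact piece exact after applying the functor. Splicing gives an exact sequence $0\to \widehat{\Hom}_{\Lambda}(U,K_{n-1})\to\cdots\to\widehat{\Hom}_{\Lambda}(U,M_0)\to\widehat{\Hom}_{\Lambda}(U,K)\to 0$ of $T$-modules whose terms, other than the last, lie in ${\rm proj}(T)\subseteq {\rm Proj}(T)$; and $\widehat{\Hom}_{\Lambda}(U,K_{n-1})$ is projective as well, since $K_{n-1}\in\mathcal{M}$. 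This is a projective resolution of $\widehat{\Hom}_{\Lambda}(U,K)=\Ker f$ of length $n-1$, so ${\pd}\,\Ker f\le n-1$.

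The main obstacle is the middle step, proving $K_{n-1}\in\mathcal{M}$: it is exactly here that both Ext-orthogonality conditions must be used in tandem with the approximation property, since neither the rigidity ${\Ext}^{i}_{\Lambda}(\mathcal{M},\mathcal{M})=0$ for $0<i<n$ nor the equality $\mathcal{M}^{\bot_n}=\mathcal{M}$ suffices alone. (For $n=1$ the statement is trivial, as then $K\in\Lambda\text{-mod}=\mathcal{M}$ and $\Ker f$ is already projective.)
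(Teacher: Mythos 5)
Your proof is correct, and its outer skeleton is the same as the paper's: transport the morphism across the equivalence $\widehat{\Hom}_{\Lambda}(U,-)\colon \mathcal{M}\to{\rm proj}(T)$, identify ${\rm Ker}(f)$ with $\widehat{\Hom}_{\Lambda}(U,K)$ by left exactness, produce an $\mathcal{M}$-resolution of $K$ of length $n-1$, and check that $\widehat{\Hom}_{\Lambda}(U,-)$ preserves its exactness (in both arguments this comes down to factoring an element of $\widehat{\Hom}_{\Lambda}(U,-)$ through a finite subsum $\bigoplus_{\alpha\in A}U_{\alpha}\in\mathcal{M}$ and lifting along an approximation). The genuine difference is the middle step: the paper simply quotes \cite[Proposition 2.3]{I3} for the existence of an exact sequence $0\to M_{n-1}\to\cdots\to M_0\to K\to 0$ with terms in $\mathcal{M}$ that stays exact under ${\rm Hom}_{\mathcal{M}}(-,-)$, whereas you reconstruct that statement from the definition of an $n$-cluster tilting subcategory: iterated surjective right $\mathcal{M}$-approximations give the syzygies $K_j$, and your dimension-shifting induction (base case from surjectivity of approximations plus ${\Ext}^1_{\Lambda}(\mathcal{M},M_j)=0$, inductive step from rigidity in degrees up to $n-1$) yields ${\Ext}^i_{\Lambda}(\mathcal{M},K_{n-1})=0$ for $0<i<n$, whence $K_{n-1}\in\mathcal{M}^{\bot_n}\cap\Lambda\textrm{-mod}=\mathcal{M}$; note the intersection with $\Lambda$-mod is legitimate because all $K_j$ are finitely generated. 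So you are in effect reproving the cited proposition of Iyama: the citation buys the paper brevity, while your version is self-contained and makes visible exactly which axioms (approximation property, $n$-rigidity, $\mathcal{M}^{\bot_n}=\mathcal{M}$) enter and where. A smaller organizational difference: the paper verifies exactness of the image complex spot by spot, while you splice short exact sequences $0\to K_{j+1}\to M_j\to K_j\to 0$, each of which stays exact since $\widehat{\Hom}_{\Lambda}(U,-)$ is componentwise $\bigoplus_{\alpha}{\Hom}_{\Lambda}(U_{\alpha},-)$ and each $M_j\to K_j$ is an $\mathcal{M}$-epimorphism; both are valid, and your splicing is arguably the cleaner bookkeeping.
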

\begin{proof}
Let $\lbrace U_{\alpha}~|~ \alpha \in J \rbrace$ be a complete set of non-isomorphic indecomposable modules in $\mathcal{M}$ and $T=\widehat{{\rm End}}_{\Lambda}(U)$, where $U=\bigoplus_{\alpha \in J} U_{\alpha}$. Let $h: Q' \rightarrow Q$ be a morphism in ${\rm proj}(T)$. We show that ${\rm pd}({\rm Ker}(h)) \leq n-1$. Since the functor $\widehat{{\Hom}}_{\Lambda}(U,-): \Lambda$-Mod$ \rightarrow T$Mod is left exact and induces an additive equivalence between the full subcategory $\mathcal{M}$ of $\Lambda$-Mod and the full subcategory  ${\rm proj}(T)$ of $T$Mod, we have the following commutative diagram 
\begin{displaymath}
\xymatrix{
0 \ar[r] & {\rm Ker}(h) \ar[r]  & Q' \ar[d]^{\cong} \ar[r]^{h} & Q \ar[d]^{\cong} \\
0 \ar [r] & \widehat{{\Hom}}_{\Lambda}(U, {\rm Ker}(f)) \ar[r] &  \widehat{{\Hom}}_{\Lambda}(U,M')  \ar[r]^{\overline{f}} & \widehat{{\Hom}}_{\Lambda}(U,M)}
\end{displaymath}
where $f: M' \rightarrow M$ is a morphism in $\mathcal{M}$ and $\overline{f}=\widehat{{\Hom}}_{\Lambda}(U, f)$. Hence ${\rm Ker}(h) \cong \widehat{{\Hom}}_{\Lambda}(U, {\rm Ker}(f))$ as $T$-modules. Therefore it is enough to show that ${\rm pd}(\widehat{{\Hom}}_{\Lambda}(U, {\rm Ker}(f))) \leq n-1$.  If $n=1$, then $\mathcal{M}=\Lambda$-mod. Since ${\rm Ker}(f)$ is a finitely generated left $\Lambda$-module, $\widehat{{\Hom}}_{\Lambda}(U, {\rm Ker}(f)) \in {\rm proj}(T)$ and so the result follows. Now let $n \geq 2$. Since ${\rm Ker}(f)$ is finitely generated, by \cite[Proposition 2.3]{I3}, we have an exact sequence $$0 \rightarrow M_{n-1} \overset{\beta_{n-1}}{\rightarrow} \cdots \rightarrow M_0 \overset{\beta_{0}}{\rightarrow} {\rm Ker}(f) \rightarrow 0$$ with terms in $\mathcal{M}$ such that the sequence
$$0 \rightarrow {\rm Hom}_{\mathcal{M}}(-,M_{n-1}) \overset{(-,\beta_{n-1})}{\rightarrow} \cdots \rightarrow {\rm Hom}_{\mathcal{M}}(-,M_0) \overset{(-,\beta_{0})}{\rightarrow} {\rm Hom}_{\mathcal{M}}(-,{\rm Ker}(f)) \rightarrow 0  \hspace{1cm} (\star)$$
is exact. Now we show that the following sequence is exact 
$$0 \rightarrow \widehat{{\Hom}}_{\Lambda}(U,M_{n-1}) \overset{\overline{\beta_{n-1}}}{\rightarrow} \cdots \rightarrow \widehat{{\Hom}}_{\Lambda}(U,M_0) \overset{\overline{\beta_{0}}}{\rightarrow} \widehat{{\Hom}}_{\Lambda}(U,{\rm Ker}(f)) \rightarrow 0, \hspace{2cm} (\star \star)$$ 
where $\overline{\beta_{i}}=\widehat{{\Hom}}_{\Lambda}(U,\beta_{i})$ for each $i \in \lbrace 0, \cdots, n-1 \rbrace$. Since the functor $\widehat{{\Hom}}_{\Lambda}(U,-): \Lambda$-Mod$ \rightarrow T$Mod is left exact, the sequence $(\star \star)$ is exact at $\widehat{{\Hom}}_{\Lambda}(U,M_{n-1})$ and $\widehat{{\Hom}}_{\Lambda}(U,M_{n-2})$. Let $l \in \lbrace -1, 0, \cdots, n-3\rbrace$. Set   $M_{-1}= {\rm Ker}(f)$ and $\beta_{-1}=0$. It is sufficient to show that it is exact at $\widehat{{\Hom}}_{\Lambda}(U,M_l)$. Let $g \in \widehat{{\Hom}}_{\Lambda}(U,M_l)$ such that $g \beta_{l} = 0$. For each $\alpha \in J$ let $e_{\alpha}=\pi_{\alpha} \varepsilon_{\alpha}$, where $\pi_{\alpha}:U \rightarrow U_{\alpha}$ is the canonical projection and $\varepsilon_{\alpha}:U_{\alpha} \rightarrow U$ is the canonical injection. Since $g \in \widehat{{\Hom}}_{\Lambda}(U,M_l)$, there exists a finite subset $A$ of $J$ such that  $e_{\alpha}g = 0$ for all $\alpha \in J \setminus A$. Consider the following diagram
\begin{displaymath}
\xymatrix{
& \bigoplus_{\alpha \in A}U_{\alpha}  \ar@/^/[d]^{\varepsilon} &\\
& U  \ar@/^/@<-1ex>[u]^{\pi} \ar[d]_{g}   & \\
M_{l+1}  \ar[r]^{\beta_{l+1}} & M_l  \ar[r]^{\beta_{l}} & M_{l-1}}
\end{displaymath}
where $M_{-2}=0$ and $\varepsilon: \bigoplus_{\alpha \in A}U_{\alpha} \rightarrow U$ and $\pi: U \rightarrow \bigoplus_{\alpha \in A}U_{\alpha}$ are the canonical injection and projection, respectively. Since $\bigoplus_{\alpha \in A}U_{\alpha} \in \mathcal{M}$, $\varepsilon g \beta_{l} = 0$, and the sequence $(\star)$ is exact, there exists a morphism $\gamma: \bigoplus_{\alpha \in A}U_{\alpha} \rightarrow M_{l+1}$ such that $\gamma \beta_{l+1} = \varepsilon g$ and so $\pi \gamma \beta_{l+1} = \pi \varepsilon g$. It is not difficult to see that $\pi \varepsilon g =g$ and  $\pi \gamma \in \widehat{{\Hom}}_{\Lambda}(U,M_{l+1})$. This implies that the sequence $(\star \star)$ is exact and so the result follows.
\end{proof}

Let $R$ be a ring with enough idempotents. A unitary left $R$-module $N$ is called {\it flat} if the functor $- \otimes_{R}N: {\rm Mod}R \rightarrow \mathfrak{Ab}$ is an exact functor, where $\mathfrak{Ab}$ is the category of abelian groups (see \cite[p. 115]{gs}). Note that a unitary left $R$-module $N$ is flat if and only if $N$ is a direct limit of finitely generated projective unitary left $R$-modules (see \cite[Proposition 49.5]{wi}). Moreover, we denote by ${\rm Flat}(T)$ the full subcategory of $T{\rm Mod}$ consisting of flat unitary left $T$-modules.  

Given a unitary left $R$-module $M$, the flat dimension of $M$ which is denoted by ${\rm fd}(M)$, is less than or equal to $m$ if there exists a finite flat resolution $$0 \rightarrow F_m \rightarrow \cdots \rightarrow F_1 \rightarrow F_0 \rightarrow M \rightarrow 0.$$
If no such finite resolution exists, then ${\rm fd}(M)=\infty$. The weak global dimension of rings with enough idempotents is defined as usual \cite[p. 98]{f}. The {\it weak global dimension} of $R$ which is denoted by ${\rm w. gl. dim}(R)$ is defined as a supremum of the flat dimension of all unitary left $R$-modules (see \cite[p. 210]{de}).

\begin{The}\label{1.2}
Let $\mathcal{M}$ be an $n$-cluster tilting subcategory of $\Lambda$-mod and $T$ be the functor ring of $\mathcal{M}$. Then ${\rm w.gl.dim}(T) \leq n+1$.
\end{The}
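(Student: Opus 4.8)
The plan is to bound the flat dimension of an arbitrary unitary left $T$-module by first reducing to finitely presented modules and then invoking Proposition \ref{1.2.2} to control the projective dimension of those.

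First I would record the standard reduction to finitely presented modules. Since $T$ is a ring with enough idempotents, the category of unitary left $T$-modules is locally finitely presented, so every unitary left $T$-module $M$ is a filtered colimit $M=\varinjlim_{i} M_i$ of finitely presented unitary modules. Computing ${\rm Tor}^{T}_{k}(N,-)$ from a projective resolution of the right module $N$ and using that tensor products and homology both commute with filtered colimits, one obtains ${\rm Tor}^{T}_{n+2}(N,M)\cong \varinjlim_{i}{\rm Tor}^{T}_{n+2}(N,M_i)$ for every unitary right $T$-module $N$. Consequently it suffices to show that every finitely presented unitary left $T$-module $M$ has projective dimension at most $n+1$: this yields ${\rm Tor}^{T}_{n+2}(N,M_i)=0$ for all $i$, hence ${\rm Tor}^{T}_{n+2}(N,M)=0$ for all $N$ and $M$, and therefore ${\rm fd}(M)\leq n+1$ for every unitary left $T$-module $M$, which is exactly ${\rm w.gl.dim}(T)\leq n+1$.

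The heart of the argument is thus the bound ${\rm pd}(M)\leq n+1$ for a finitely presented $M$. I would choose a presentation $P_1\xrightarrow{d}P_0\to M\to 0$ with $P_0,P_1\in {\rm proj}(T)$ and set $K={\rm Ker}(d)$. Because $d$ is a morphism in ${\rm proj}(T)$, Proposition \ref{1.2.2} gives ${\rm pd}(K)\leq n-1$. Splicing a projective resolution of $K$ of length at most $n-1$ into the short exact sequences $0\to K\to P_1\to {\rm Im}(d)\to 0$ and $0\to {\rm Im}(d)\to P_0\to M\to 0$ produces a projective resolution of $M$ of length at most $(n-1)+1+1=n+1$; hence ${\rm pd}(M)\leq n+1$, and a fortiori ${\rm fd}(M)\leq {\rm pd}(M)\leq n+1$. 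The value $n=1$ is the degenerate case in which $K$ is projective and the spliced resolution has length $2=n+1$, recovering the classical bound.

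I expect the homological splicing to be entirely routine, so the only point demanding care is the reduction step in the non-unital setting: justifying that unitary modules over a ring with enough idempotents are filtered colimits of finitely presented ones and that ${\rm Tor}$ is compatible with these colimits. Everything else rests directly on Proposition \ref{1.2.2}, which furnishes the key structural input and plays the role that the coherence-type arguments of Fuller and Auslander play in the classical case $n=1$.
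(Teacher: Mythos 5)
Your proof is correct, and it is organized differently from the paper's even though both arguments rest on the same two ingredients: Proposition \ref{1.2.2} and the compatibility of $\mathrm{Tor}$ with filtered colimits. The difference is where the filtered-colimit approximation is inserted. The paper fixes an arbitrary $N \in T\mathrm{Mod}$, takes a partial projective resolution $0 \to K \to Q_n \to \cdots \to Q_1 \stackrel{g}{\to} Q_0 \to N \to 0$, and, by the technique of \cite[Theorem 3.1(1)]{fin}, writes the second syzygy $\mathrm{Ker}(g)$ as a filtered colimit of kernels $\mathrm{Ker}(g_i)$ of morphisms $g_i$ in $\mathrm{proj}(T)$; then $\mathrm{pd}(\mathrm{Ker}(g_i)) \leq n-1$ forces $\mathrm{Tor}^{T}_{n}(X, \mathrm{Ker}(g)) = 0$ for every unitary right $T$-module $X$, and dimension shifting shows that the top kernel $K$ is flat, so the displayed sequence is itself a flat resolution of length $n+1$. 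You instead approximate the module itself by finitely presented modules and prove the stronger intermediate statement that every finitely presented unitary $T$-module has projective dimension at most $n+1$, which is where Proposition \ref{1.2.2} enters via the splice. Your route buys an explicit projective-dimension bound for finitely presented modules and relies only on the standard fact that $T\mathrm{Mod}$ is locally finitely presented (the $Te_{\alpha}$ are finitely generated projective generators); the paper's route never needs to present $N$ as a colimit of finitely presented modules, but in exchange it must decompose a morphism between possibly infinitely generated projectives into morphisms in $\mathrm{proj}(T)$, the less standard step it imports from \cite{fin}. One point you should make explicit in a final write-up: you compute $\mathrm{Tor}$ by resolving the right-hand variable but conclude a statement about the flat dimension of the left-hand variable, so you are invoking balance of $\mathrm{Tor}$ for unitary modules over a ring with enough idempotents; this is true, but it deserves a citation in the non-unital setting (the paper's own dimension-shifting step uses it just as implicitly).
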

\begin{proof}
For any $N \in T$Mod, we take an exact sequence $$0 \rightarrow K \rightarrow Q_n \rightarrow \cdots \rightarrow Q_1 \overset{g}{\rightarrow} Q_0 \rightarrow N \rightarrow 0$$ in $T$Mod, where each $Q_i$ is projective in $T$Mod. By applying the technique used in the proof of \cite[Theorem 3.1(1)]{fin}, we can see that ${\rm Ker}(g)\cong {\underrightarrow{\lim}} {\rm Ker}(g_i)$ as $T$-modules, where each $g_i$ is a morphism in ${\rm proj}(T)$. By Proposition \ref{1.2.2}, ${\rm pd}({\rm Ker}(g_i)) \leq n-1$. Let $X$ be a unitary right $T$-module. Then ${\rm Tor}_n^{T}(X, {\rm Ker}(g_i))=0$. Since  ${\rm Tor}_n^{T}(X, {\rm Ker}(g)) \cong {\underrightarrow{\lim}} {\rm Tor}_n^{T}(X, {\rm Ker}(g_i))$, we have ${\rm Tor}_n^{T}(X, {\rm Ker}(g)) = 0$. This yields $K$ is a flat unitary left $T$-module. Therefore ${\rm fd}(N) \leq n+1$ and so ${\rm w.gl.dim}(T) \leq n+1$.
\end{proof}

As a consequence, we have the following results.

\begin{Cor} {\rm (}\cite[Proposition 1.5]{f}{\rm )}
Let $T$ be the functor ring of $\Lambda$-mod. Then ${{\rm w.gl.dim}(T) \leq 2}$.
\end{Cor}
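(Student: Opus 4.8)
The plan is to recognize this corollary as the special case $n = 1$ of Theorem \ref{1.2}. First I would recall from the Definition that $\Lambda$-mod is itself the unique $1$-cluster tilting subcategory of $\Lambda$-mod; this is precisely what lets me invoke the theorem with $n = 1$ and $\mathcal{M} = \Lambda$-mod. There is nothing to set up beyond matching the conventions, since the functor ring referred to in the statement is exactly the functor ring of the $1$-cluster tilting subcategory $\Lambda$-mod.

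Then, by definition, the functor ring $T$ here is $\widehat{{\rm End}}_{\Lambda}(U)$, where $U = \bigoplus_{\alpha \in J} U_{\alpha}$ and $\lbrace U_{\alpha} \mid \alpha \in J \rbrace$ is a complete set of non-isomorphic indecomposable modules in $\Lambda$-mod. Applying Theorem \ref{1.2} with $n = 1$ gives directly ${\rm w.gl.dim}(T) \leq n + 1 = 2$, which is the asserted bound.

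I expect no substantive obstacle here: the entire content resides in Theorem \ref{1.2}, and the corollary is merely its instance at the smallest admissible value of $n$. The only point worth verifying is that the functor ring of $\Lambda$-mod in the sense of Fuller \cite[Proposition 1.5]{f} coincides with the ring $\widehat{{\rm End}}_{\Lambda}(U)$ appearing in Theorem \ref{1.2} when $\mathcal{M} = \Lambda$-mod; this is immediate from the construction recalled on p. 40 of \cite{fu}, so the proof reduces to a one-line specialization.
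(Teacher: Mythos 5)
Your proposal is correct and matches the paper's own (implicit) argument: the corollary is stated as an immediate consequence of Theorem \ref{1.2}, obtained by taking $n=1$ and $\mathcal{M}=\Lambda$-mod, which the paper's conventions allow since $\Lambda$-mod is the unique $1$-cluster tilting subcategory of itself and its functor ring is exactly $\widehat{{\rm End}}_{\Lambda}(U)$ in the sense of Fuller. Your added check that the two notions of functor ring coincide is the only point of substance, and you resolve it correctly.
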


$\Lambda$ is called {\it of finite type} if $\Lambda$-mod has an additive generator.

\begin{Cor}{\rm (}\cite[Proposition 4.2]{Ausla2}{\rm )}
Let $\Lambda$ be of finite type. Then ${\rm gl.dim}({\rm End}_{\Lambda}(M)) \leq 2$, where $M$ is an additive generator of $\Lambda$-mod.
\end{Cor}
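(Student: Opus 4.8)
The plan is to deduce this from the immediately preceding corollary, together with the observation that in the finite type case the functor ring degenerates to an ordinary unital endomorphism ring, after which one passes from weak global dimension to global dimension by Noetherianity. Since $\Lambda$ is of finite type, $\Lambda$-mod is itself a $1$-cluster tilting subcategory possessing an additive generator. First I would fix a complete set $\lbrace U_\alpha \mid \alpha \in J \rbrace$ of non-isomorphic indecomposable modules in $\Lambda$-mod; finiteness of type means precisely that $J$ is a finite set, so $U = \bigoplus_{\alpha \in J} U_\alpha$ is finitely generated and is itself an additive generator, whence ${\rm add}(U) = {\rm add}(M) = \Lambda$-mod. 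Because $J$ is finite, the condition ``$e_\alpha f = 0$ for almost all $\alpha$'' in the definition of $\widehat{{\rm Hom}}$ is vacuous, so the functor ring $T = \widehat{{\rm End}}_\Lambda(U)$ coincides with the ordinary endomorphism ring ${\rm End}_\Lambda(U)$; moreover $\sum_{\alpha \in J} e_\alpha = {\rm id}_U$, so $T$ is a ring with unit and the notions of flat, projective, and weak global dimension for unitary $T$-modules reduce to the classical ones.

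Next, applying the preceding corollary, equivalently Theorem \ref{1.2} with $n = 1$, yields ${\rm w.gl.dim}(T) \leq 2$. It then remains to upgrade this weak bound to a bound on the global dimension. Here I would use that ${\rm End}_\Lambda(U)$, being the endomorphism ring of a finitely generated module over the artin algebra $\Lambda$, is itself an artin algebra and in particular two-sided Noetherian. For such a ring the weak global dimension and the (left, equivalently right) global dimension coincide, so ${\rm gl.dim}({\rm End}_\Lambda(U)) = {\rm w.gl.dim}(T) \leq 2$. Finally, since any two additive generators of $\Lambda$-mod have Morita equivalent endomorphism rings and global dimension is a Morita invariant, the bound transfers from $U$ to the given additive generator $M$, giving ${\rm gl.dim}({\rm End}_\Lambda(M)) \leq 2$.

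The main obstacle is the bookkeeping in the first step: one must verify carefully that the Gabriel/functor ring construction with enough idempotents genuinely collapses to the classical unital endomorphism ring when the representation type is finite, and that ``weak global dimension'' as defined for rings with enough idempotents matches the usual definition for rings with unit. Once this identification is secured and the standard equality of weak and global dimension over Noetherian rings is invoked, the remaining steps are routine homological algebra.
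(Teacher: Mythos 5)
Your proof is correct and follows exactly the route the paper intends: the corollary is stated as an immediate consequence of Theorem \ref{1.2} (via its $n=1$ case, i.e.\ the preceding corollary), with the functor ring collapsing to the unital ring ${\rm End}_\Lambda(U)$ in the finite type case, the passage from weak to global dimension via Noetherianity of the artin algebra, and Morita invariance handling the choice of additive generator. The paper omits these details, and your proposal supplies them correctly.
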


\section{Main results}

In this section we prove the main results of the paper. A full subcategory $\mathcal{M}$ of an abelian category $\mathcal{A}$ is called {\it $n$-rigid}, if ${\rm Ext}_{\mathcal{A}}^{k}(\mathcal{M}, \mathcal{M})=0$ for every $k \in \lbrace 1, \cdots, n-1\rbrace$ (see \cite[p. 443]{b}). Clearly any $n$-cluster tilting subcategory $\mathcal{M}$ of $\mathcal{A}$ is $n$-rigid.\\

In the following proposition, we list some basic properties of the subcategory ${\underrightarrow{\lim}}\mathcal{M}$ of $\Lambda$-Mod when $\mathcal{M}$ is an $n$-cluster tilting subcategory of $\Lambda$-mod.

\begin{Pro}\label{2.1}
Let $\mathcal{M}$ be an $n$-cluster tilting subcategory of $\Lambda$-mod with $n \geq 2$. Then
\begin{itemize}
\item[$(a)$] ${\underrightarrow{\lim}}\mathcal{M}$ is a generating-cogenerating subcategory of $\Lambda$-{\rm Mod}.
\item[$(b)$] ${\underrightarrow{\lim}}\mathcal{M}$ is a functorially finite subcategory of $\Lambda$-{\rm Mod}.
\item[$(c)$] ${\rm Ext}_{\Lambda}^{k}({\rm Add}(\mathcal{M}), {\underrightarrow{\lim}}\mathcal{M})=0$ for every $k \in \lbrace 1,\ldots,n-1 \rbrace$.
\item[$(d)$]  $\lbrace X\in  \Lambda$-{\rm Mod}$~|~ {\rm Ext}^i_{\Lambda}(\mathcal{M},X)=0 ~~~ {\rm for ~all}~ 0<i<n \rbrace \subseteq {\underrightarrow{\lim}}\mathcal{M}$.
\end{itemize}
\end{Pro}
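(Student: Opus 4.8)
The plan is to prove each of the four items by exploiting the functor ring $T$ of $\mathcal{M}$ together with the weak global dimension bound from Theorem \ref{1.2} and the equivalence $\widehat{\Hom}_{\Lambda}(U,-)\colon \mathrm{Add}(\mathcal{M})\xrightarrow{\ \sim\ }\mathrm{Proj}(T)$, which extends to an identification of ${\underrightarrow{\lim}}\mathcal{M}$ with the flat $T$-modules. The key translation is that $N\in T\mathrm{Mod}$ is flat if and only if it is a filtered colimit of finitely generated projectives (cited already as \cite[Proposition 49.5]{wi}), so under $\widehat{\Hom}_{\Lambda}(U,-)$ the subcategory ${\underrightarrow{\lim}}\mathcal{M}$ corresponds to $\mathrm{Flat}(T)$. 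I would establish this correspondence first, since all four parts then reduce to statements about flat modules over a ring with enough idempotents of bounded weak global dimension.

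For part $(a)$, since $\mathcal{M}$ is $n$-cluster tilting it is generating-cogenerating in $\Lambda$-mod, so every module in $\Lambda$-mod is a quotient of (and a sub of) an object of $\mathcal{M}$; I would then pass to arbitrary $\Lambda$-Mod by writing any module as a filtered colimit of finitely generated ones and using that $\mathcal{M}\subseteq{\underrightarrow{\lim}}\mathcal{M}$ together with the fact that filtered colimits are exact in $\Lambda$-Mod, so the generating (dually cogenerating) property survives the colimit. For part $(b)$, functorial finiteness of ${\underrightarrow{\lim}}\mathcal{M}$ is where I expect to lean hardest on the structure theory: the contravariant finiteness should follow because $\mathrm{Flat}(T)$ is contravariantly finite (covers exist for flat modules over such rings), and translating back through the equivalence and $U\otimes_T-$ gives right approximations in $\Lambda$-Mod; covariant finiteness would come dually, possibly using that ${\underrightarrow{\lim}}\mathcal{M}$ is closed under products or invoking a cotorsion-pair/definable-subcategory argument. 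This is the part most likely to require the full weight of Theorem \ref{1.2} and the cited functor-category results.

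For part $(c)$, I would argue that $\Ext^k_{\Lambda}(\mathcal{M},\mathcal{M})=0$ for $0<k<n$ (rigidity of the $n$-cluster tilting subcategory), and then show this vanishing propagates to $\Ext^k_{\Lambda}(\mathrm{Add}(\mathcal{M}),{\underrightarrow{\lim}}\mathcal{M})$. The direct-summand closure handles the $\mathrm{Add}$ in the first argument. For the second argument, I would commute $\Ext^k_{\Lambda}(M,-)$ with the filtered colimit defining an object of ${\underrightarrow{\lim}}\mathcal{M}$; this is legitimate because $M\in\mathrm{Add}(\mathcal{M})$ and each $M_\alpha\in\mathcal{M}$ is finitely presented, so $\Ext^k_{\Lambda}(M_\alpha,-)$ commutes with direct limits in the relevant range, reducing to the already-known vanishing on $\mathcal{M}$.

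For part $(d)$, given $X$ with $\Ext^i_{\Lambda}(\mathcal{M},X)=0$ for all $0<i<n$, I would build a resolution of $X$ by objects of $\mathcal{M}$ (using that $\mathcal{M}$ is generating) and apply $\widehat{\Hom}_{\Lambda}(U,-)$ to obtain a $T$-module whose homological behaviour is controlled by Proposition \ref{1.2.2} and Theorem \ref{1.2}; the $\Ext$-vanishing hypothesis should force the associated $T$-module to be flat, i.e.\ to lie in $\mathrm{Flat}(T)$, which transports back to $X\in{\underrightarrow{\lim}}\mathcal{M}$. Concretely, the $\Ext$-orthogonality conditions translate into the exactness needed to realize $\widehat{\Hom}_{\Lambda}(U,X)$ as a filtered colimit of finitely generated projectives. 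The main obstacle across all parts, and especially here and in $(b)$, is handling the passage between the finitely generated world (where $\mathcal{M}$, $\mathrm{proj}(T)$, and the $n$-Auslander resolutions from \cite[Proposition 2.3]{I3} live) and the big module category, making sure every colimit-commutation of $\Ext$ and every flatness argument is justified by finite presentation of the objects of $\mathcal{M}$ and by the weak-dimension bound $\mathrm{w.gl.dim}(T)\leq n+1$.
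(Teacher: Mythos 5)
Your translation into the functor ring $T$ is in the right spirit, and your treatments of $(a)$ and $(c)$ are essentially the paper's (one caution on $(a)$: you cannot simply "take the colimit" of epimorphisms $M_i \twoheadrightarrow X_i$, since these need not form a morphism of direct systems; the clean fix, and the paper's actual route, is that $\Lambda, D(\Lambda) \in \mathcal{M}$ already places every free module and every injective module in ${\rm Add}(\mathcal{M}) \subseteq {\underrightarrow{\lim}}\mathcal{M}$). The genuine gap is in $(d)$: you propose to resolve $X$ by objects of $\mathcal{M}$ \emph{using that $\mathcal{M}$ is generating}, i.e. a left resolution $\cdots \rightarrow P_1 \rightarrow P_0 \rightarrow X \rightarrow 0$. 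This cannot interact with the hypothesis ${\rm Ext}^i_{\Lambda}(\mathcal{M},X)=0$ for $0<i<n$: that hypothesis has $X$ in the \emph{second} variable, and $\widehat{\mathrm{Hom}}_{\Lambda}(U,-)$ is only left exact, so applying it to a left resolution produces a complex that need not even map onto $\widehat{\mathrm{Hom}}_{\Lambda}(U,X)$, and whose failure of exactness is measured by ${\rm Ext}^i_{\Lambda}(U,-)$ of the syzygies of $X$ --- quantities on which you have no control. (Moreover, a non-finitely-generated $X$ admits no resolution by objects of $\mathcal{M}$ itself, only by ${\rm Add}(\mathcal{M})$.) What makes the argument work is the \emph{cogenerating} side: take the minimal injective coresolution $0 \rightarrow X \rightarrow E_0 \rightarrow \cdots \rightarrow E_n$; since $D(\Lambda)\in\mathcal{M}$, each $E_l$ lies in ${\rm Add}(\mathcal{M})$, hence each $\widehat{\mathrm{Hom}}_{\Lambda}(U,E_l)$ lies in ${\rm Proj}(T)$; the Ext-vanishing hypothesis is precisely what makes $0 \rightarrow \widehat{\mathrm{Hom}}_{\Lambda}(U,X) \rightarrow \widehat{\mathrm{Hom}}_{\Lambda}(U,E_0) \rightarrow \cdots \rightarrow \widehat{\mathrm{Hom}}_{\Lambda}(U,E_n)$ exact; then ${\rm w.gl.dim}(T)\leq n+1$ (Theorem \ref{1.2}) forces $\widehat{\mathrm{Hom}}_{\Lambda}(U,X)$ to be flat by dimension shifting, and $U\otimes_T-$ together with the isomorphism $U\otimes_T\widehat{\mathrm{Hom}}_{\Lambda}(U,X)\cong X$ (which uses $\Lambda\in\mathcal{M}$) puts $X$ in ${\underrightarrow{\lim}}\mathcal{M}$. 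So the missing idea is to exploit injective coresolutions, not generating resolutions.

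A secondary weakness is $(b)$. Getting contravariant finiteness from flat covers in $T{\rm Mod}$, and covariant finiteness from "closure under products or a cotorsion-pair/definable argument", is plausible but hangs on external results you have not supplied (existence of flat covers over a ring with enough idempotents; closure of ${\underrightarrow{\lim}}\mathcal{M}$ under products), and you would also need to transport approximations of \emph{arbitrary} $\Lambda$-modules (not just those in ${\underrightarrow{\lim}}\mathcal{M}$) back along $U\otimes_T-$, checking counit naturality. The paper's route avoids all of this: covariant finiteness is immediate from \cite[Proposition 3.11]{sp} applied to the covariantly finite subcategory $\mathcal{M}$ of $\Lambda$-mod, and contravariant finiteness follows from the factorization technique of \cite[Proposition 13.3]{wi} combined with closure of ${\underrightarrow{\lim}}\mathcal{M}$ under coproducts and direct limits (\cite[Theorem 4.1]{wc}), with no covers needed.
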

\begin{proof}
$(a)$. We already know that every module is a factor module of a free module and every module is an essential submodule of an injective module. From $\Lambda , D(\Lambda)  \in \mathcal{M}$, for each left $\Lambda$-module $M$, we obtain homomorphisms $P \rightarrow M$ and $M \rightarrow E$ which are epimorphism and monomorphism, respectively, where $P, E \in {\rm Add}(\mathcal{M})$. Hence ${\underrightarrow{\lim}}\mathcal{M}$ is a generating-cogenerating subcategory of $\Lambda$-{\rm Mod}.\\
$(b)$. Since $\mathcal{M}$ is a covariantly finite subcategory of $\Lambda$-mod, by \cite[Proposition 3.11]{sp}, ${\underrightarrow{\lim}}\mathcal{M}$ is a covariantly finite subcategory of $\Lambda$-{\rm Mod}. Also, by applying the technique used in the proof of $((b) \Rightarrow (c))$ of \cite[Proposition 13.3]{wi}, we have an epimorphism  $\varphi: \bigoplus_{Q \in {\underrightarrow{\lim}}\mathcal{M}}Q \rightarrow M$ with the property that each homomorphism from a module $Q' \in {\underrightarrow{\lim}}\mathcal{M}$ to $M$ factorizes through $\varphi$. Since by \cite[Theorem 4.1]{wc}, ${\underrightarrow{\lim}}\mathcal{M}$ is an additive category with direct limit,  $\varphi$ is a right ${\underrightarrow{\lim}}\mathcal{M}$-approximation for $M$. This implies that ${\underrightarrow{\lim}}\mathcal{M}$ is a contravariantly finite subcategory of $\Lambda$-{\rm Mod} and the result follows.\\
$(c)$. By using \cite[Proposition 7.21]{r} and \cite[Proposition 3.1.16]{e}, for any $N \in {\rm Add}(\mathcal{M})$ and $M \in {\underrightarrow{\lim}}\mathcal{M}$, we have ${\rm Ext}_{\Lambda}^k(N,M) \cong \prod_{i \in A}{\underrightarrow{\lim}}_{j \in J} {\rm Ext}_{\Lambda}^k(N_i,M_j)$, where each $N_i, M_j \in \mathcal{M}$. Since $\mathcal{M}$ is n-rigid, ${\rm Ext}_{\Lambda}^{k}(N, M)=0$ for every $k \in \lbrace 1,\cdots,n-1 \rbrace$ and the result follows.\\
$(d)$. Let $\lbrace U_{\alpha}~|~ \alpha \in J \rbrace$ be a complete set of representative of the isomorphic classes of indecomposable modules in $\mathcal{M}$ and $T=\widehat{{\rm End}}_{\Lambda}(U)$, where $U=\bigoplus _{\alpha \in J} U_{\alpha}$. Assume that $X$ is a left $\Lambda$-module such that ${\rm Ext}_{\Lambda}^{k}(\mathcal{M}, X)=0$ for all $k \in \lbrace 1, \cdots,n-1 \rbrace$. We show that $X \in {\underrightarrow{\lim}}\mathcal{M}$. Consider the minimal injective resolution of $X$, namely
$$0 \rightarrow X \rightarrow E_0 \rightarrow E_1 \rightarrow \cdots \rightarrow E_n.$$
Applying the functor $\widehat{{\Hom}}_{\Lambda}(U,-)$ and using the fact that ${\rm Ext}_{\Lambda}^{k}(\mathcal{M}, X)=0$ for all ${k \in \lbrace 1, \cdots, n-1 \rbrace}$, we obtain an exact sequence 
\begin{center}
$0 \rightarrow \widehat{{\Hom}}_{\Lambda}(U,X) \rightarrow \widehat{{\Hom}}_{\Lambda}(U,E_0) \rightarrow \widehat{{\Hom}}_{\Lambda}(U,E_1) \rightarrow \cdots \rightarrow \widehat{{\Hom}}_{\Lambda}(U,E_n).$
\end{center} 
Since each $E_l \in {\rm Add}(\mathcal{M})$, each $\widehat{{\Hom}}_{\Lambda}(U,E_l) \in {\rm Proj}(T)$ and so by Theorem \ref{1.2}, we can see that $\widehat{{\Hom}}_{\Lambda}(U,X) \in {\rm Flat}(T)$. Hence $U \otimes_T \widehat{{\Hom}}_{\Lambda}(U,X) \in {\underrightarrow{\lim}}\mathcal{M}$. On the other hand, by using the fact that $\Lambda \in \mathcal{M}$, it is not difficult to see that $U \otimes_T \widehat{{\Hom}}_{\Lambda}(U,X) \cong X$. Therefore $X \in {\underrightarrow{\lim}}\mathcal{M}$ and the result follows.  
\end{proof}

As a consequence, we have the following result.

\begin{Cor}\label{2.1.1}
Let $\mathcal{M}$ be an $n$-cluster tilting subcategory of $\Lambda$-mod with $n \geq 2$. Then $\lbrace X\in  \Lambda$-{\rm Mod}$~|~ {\rm Ext}^i_{\Lambda}(\mathcal{M},X)=0 ~~~ {\rm for ~all}~ 0<i<n \rbrace = {\underrightarrow{\lim}}\mathcal{M}$.
\end{Cor}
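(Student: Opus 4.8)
The statement is an equality of two full subcategories of $\Lambda$-Mod, so the plan is to verify the two inclusions separately, each of which should be read off directly from Proposition \ref{2.1}. Writing $\mathcal{X}=\lbrace X\in \Lambda\text{-Mod}~|~\Ext^i_{\Lambda}(\mathcal{M},X)=0 \text{ for all } 0<i<n\rbrace$ for the left-hand set, I want to show $\mathcal{X}={\underrightarrow{\lim}}\mathcal{M}$.

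For the inclusion $\mathcal{X}\subseteq {\underrightarrow{\lim}}\mathcal{M}$ there is nothing further to do: this is exactly the content of Proposition \ref{2.1}(d), which asserts that every module $X$ satisfying $\Ext^i_{\Lambda}(\mathcal{M},X)=0$ for all $0<i<n$ already lies in ${\underrightarrow{\lim}}\mathcal{M}$.

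For the reverse inclusion ${\underrightarrow{\lim}}\mathcal{M}\subseteq \mathcal{X}$, I would take an arbitrary $X\in {\underrightarrow{\lim}}\mathcal{M}$ and check that it lies in $\mathcal{X}$, i.e.\ that $\Ext^i_{\Lambda}(M,X)=0$ for every $M\in\mathcal{M}$ and every $0<i<n$. The key observation is that $\mathcal{M}\subseteq {\rm Add}(\mathcal{M})$, so this vanishing is a special case of Proposition \ref{2.1}(c), which gives $\Ext^k_{\Lambda}({\rm Add}(\mathcal{M}),{\underrightarrow{\lim}}\mathcal{M})=0$ for all $1\leq k\leq n-1$. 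Applying that statement with the first argument taken in $\mathcal{M}$ and the second argument equal to our fixed $X$ yields precisely the required vanishing, whence $X\in\mathcal{X}$.

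Since both inclusions are obtained immediately from Proposition \ref{2.1}, there is no genuine obstacle here; this is a true corollary. The only points requiring a moment's care are purely notational: that the index ranges $0<i<n$ and $1\leq k\leq n-1$ coincide, and that the inclusion $\mathcal{M}\subseteq{\rm Add}(\mathcal{M})$ legitimizes using part (c) to test against objects of $\mathcal{M}$ itself. (Should one prefer not to invoke (c), the inclusion ${\underrightarrow{\lim}}\mathcal{M}\subseteq\mathcal{X}$ can equally be proved directly: each $M\in\mathcal{M}$ is finitely presented with a resolution by finitely generated projectives, so $\Ext^i_{\Lambda}(M,-)$ commutes with filtered colimits, and the $n$-rigidity of $\mathcal{M}$ then forces $\Ext^i_{\Lambda}(M,{\underrightarrow{\lim}}M_j)\cong {\underrightarrow{\lim}}\,\Ext^i_{\Lambda}(M,M_j)=0$ for $0<i<n$.) Combining the two inclusions gives the asserted equality.
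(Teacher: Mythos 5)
Your proof is correct and matches the paper's intended argument: the paper presents this as an immediate consequence of Proposition \ref{2.1}, with the inclusion $\mathcal{X}\subseteq{\underrightarrow{\lim}}\mathcal{M}$ being exactly part (d) and the reverse inclusion following from part (c) via $\mathcal{M}\subseteq{\rm Add}(\mathcal{M})$, just as you argue. Your parenthetical alternative (finite presentation of objects of $\mathcal{M}$ plus commutation of $\Ext$ with filtered colimits and $n$-rigidity) is also sound, and is in fact how the paper proves Proposition \ref{2.1}(c) itself.
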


Let $\mathcal{M}$ be an $n$-cluster tilting subcategory of $\Lambda$-mod with $n \geq 2$. Then by the proof of Proposition \ref{2.1} (a) and (b),  ${\rm Add}(\mathcal{M})$ is a generating-cogenerating contravariantly finite subcategory of $\Lambda$-Mod. Also, by Proposition \ref{2.1} (c), ${\rm Add}(\mathcal{M})$ is an $n$-rigid subcategory of $\Lambda$-Mod. Therefore it is natural to ask when ${\rm Add}(\mathcal{M})$ is a maximal $n$-rigid subcategory of $\Lambda$-Mod. Hence we prove the following theorem. 

\begin{The}\label{3.1}
Let $\mathcal{M}$ be an $n$-cluster tilting subcategory of $\Lambda$-mod with $n \geq 2$. Then the following statements are equivalent.
\begin{itemize}
\item[$(a)$] $\mathcal{M}$ is of finite type.
\item[$(b)$] $\lbrace X\in  \Lambda$-{\rm Mod}$~|~ {\rm Ext}^i_{\Lambda}(\mathcal{M},X)=0 ~~~ {\rm for ~all}~ 0<i<n \rbrace = {\rm Add}(\mathcal{M})$.
\item[$(c)$] $\lbrace X\in  \Lambda$-{\rm Mod}$~|~ {\rm Ext}^i_{\Lambda}(\mathcal{M},X)=0 ~~~ {\rm for ~all}~ 0<i<n \rbrace \subseteq {\rm Add}(\mathcal{M})$.
\item[$(d)$] ${\rm Add}(\mathcal{M})$ is an $n$-cluster tilting subcategory of $\Lambda$-{\rm Mod}.
\end{itemize}
\end{The}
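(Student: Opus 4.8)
The plan is to reduce the whole statement to a single equality of subcategories and then to transport it to the functor ring. Write $\mathcal{M}^{\bot_n}=\lbrace X\in \Lambda\text{-Mod}\mid {\rm Ext}^i_{\Lambda}(\mathcal{M},X)=0,\ 0<i<n\rbrace$ for the subcategory occurring in $(b)$ and $(c)$. By Corollary \ref{2.1.1} one has $\mathcal{M}^{\bot_n}={\underrightarrow{\lim}}\mathcal{M}$, and since every direct sum of objects of $\mathcal{M}$ is a filtered colimit of objects of $\mathcal{M}$ and ${\underrightarrow{\lim}}\mathcal{M}$ is closed under direct summands, one always has ${\rm Add}(\mathcal{M})\subseteq {\underrightarrow{\lim}}\mathcal{M}=\mathcal{M}^{\bot_n}$. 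Hence the inclusion of $(c)$ together with this reverse inclusion forces the equality of $(b)$, so $(b)\Leftrightarrow(c)$, and both amount to the single assertion ${\underrightarrow{\lim}}\mathcal{M}={\rm Add}(\mathcal{M})$. The theorem thus reduces to showing that $(a)$, $(d)$ and the equality ${\underrightarrow{\lim}}\mathcal{M}={\rm Add}(\mathcal{M})$ are equivalent.

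The device that links these is the functor ring $T=\widehat{{\rm End}}_{\Lambda}(U)$. Recall that $\widehat{{\rm Hom}}_{\Lambda}(U,-)$ is an additive equivalence ${\rm Add}(\mathcal{M})\simeq {\rm Proj}(T)$ with inverse $U\otimes_T-$. First I would check that this equivalence restricts to ${\underrightarrow{\lim}}\mathcal{M}\simeq {\rm Flat}(T)$: since each $U_{\alpha}$ is finitely presented and $\widehat{{\rm Hom}}_{\Lambda}(U,X)=\bigoplus_{\alpha}{\rm Hom}_{\Lambda}(U_{\alpha},X)$, the functor $\widehat{{\rm Hom}}_{\Lambda}(U,-)$ commutes with filtered colimits and so sends a filtered colimit of objects of $\mathcal{M}$ to a filtered colimit of objects of ${\rm proj}(T)$, i.e.\ to a flat module; conversely the computation in the proof of Proposition \ref{2.1}$(d)$ shows that any $X$ with $\widehat{{\rm Hom}}_{\Lambda}(U,X)$ flat lies in ${\underrightarrow{\lim}}\mathcal{M}$ and satisfies $X\cong U\otimes_T\widehat{{\rm Hom}}_{\Lambda}(U,X)$. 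Under this dictionary the condition ${\underrightarrow{\lim}}\mathcal{M}={\rm Add}(\mathcal{M})$ becomes exactly ${\rm Flat}(T)={\rm Proj}(T)$, i.e.\ that $T$ is left perfect.

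It then remains to identify left perfectness of $T$ with finite type. If $(a)$ holds, then $J$ is finite, $U$ is finitely generated and $T={\rm End}_{\Lambda}(U)$ is a genuine unital artin algebra; being artinian it is left perfect, so ${\rm Flat}(T)={\rm Proj}(T)$ and $(b)$ follows. The real content is the converse ${\rm Flat}(T)={\rm Proj}(T)\Rightarrow\mathcal{M}$ of finite type, and this is the \emph{main obstacle}. I would argue by contraposition through a Bass-type characterisation of left perfect rings with enough idempotents: if $\mathcal{M}$ is not of finite type then $J$ is infinite and $\lbrace e_{\alpha}\rbrace_{\alpha\in J}$ is an infinite orthogonal family of idempotents of $T$, out of which one constructs a flat unitary left $T$-module with no projective cover, hence not projective — equivalently, an explicit object of ${\underrightarrow{\lim}}\mathcal{M}\setminus{\rm Add}(\mathcal{M})$. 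Phrased on the module side, ${\underrightarrow{\lim}}\mathcal{M}={\rm Add}(\mathcal{M})$ says that every object of ${\underrightarrow{\lim}}\mathcal{M}$ is a direct sum of finitely generated indecomposables (Krull--Remak--Schmidt--Azumaya in ${\rm Add}(\mathcal{M})$), a pure-semisimplicity phenomenon which forces finiteness; proving that implication rigorously (either via the analogue of Bass' Theorem P for rings with enough idempotents, or by the explicit flat non-projective construction) is the crux of the argument.

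Finally I would incorporate $(d)$. Because ${\rm Ext}^i_{\Lambda}(\bigoplus_j M_j,X)\cong\prod_j{\rm Ext}^i_{\Lambda}(M_j,X)$ and Ext is unaffected by passing to direct summands, one has ${\rm Add}(\mathcal{M})^{\bot_n}=\mathcal{M}^{\bot_n}={\underrightarrow{\lim}}\mathcal{M}$. Recall from the discussion preceding the theorem that ${\rm Add}(\mathcal{M})$ is always generating-cogenerating, $n$-rigid and contravariantly finite. Hence if $(d)$ holds, the defining equality ${\rm Add}(\mathcal{M})={\rm Add}(\mathcal{M})^{\bot_n}=\mathcal{M}^{\bot_n}$ is precisely $(b)$; conversely, once ${\underrightarrow{\lim}}\mathcal{M}={\rm Add}(\mathcal{M})$ is known, ${\rm Add}(\mathcal{M})$ inherits functorial finiteness from Proposition \ref{2.1}$(b)$, satisfies ${\rm Add}(\mathcal{M})^{\bot_n}={\rm Add}(\mathcal{M})$, and the remaining identity ${}^{\bot_n}{\rm Add}(\mathcal{M})={\rm Add}(\mathcal{M})$ certifies that it is $n$-cluster tilting — alternatively the equivalence $(a)\Leftrightarrow(d)$ may be quoted from \cite[Theorem 4.12]{en}. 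Assembling $(a)\Rightarrow(b)\Leftrightarrow(c)\Rightarrow(a)$ with $(a)\Leftrightarrow(d)$ then closes the cycle.
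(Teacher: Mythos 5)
Your forward reductions are sound and essentially coincide with the paper's: $(a)\Rightarrow(b)$ via the functor ring ($\mathcal{M}$ of finite type makes $T$ a unital artin algebra, hence ${\rm Flat}(T)={\rm Proj}(T)$, hence ${\underrightarrow{\lim}}\mathcal{M}={\rm Add}(\mathcal{M})$, then Corollary \ref{2.1.1}); the equivalence $(b)\Leftrightarrow(c)$ from Corollary \ref{2.1.1} together with ${\rm Add}(\mathcal{M})\subseteq{\underrightarrow{\lim}}\mathcal{M}$; and $(d)\Rightarrow(b)$ is immediate. The genuine gap is the return direction: nowhere do you actually prove $(c)\Rightarrow(a)$ or $(c)\Rightarrow(d)$, and your closing sentence ``$(c)\Rightarrow(a)$ \ldots closes the cycle'' appeals to exactly the step you left open. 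Your primary route --- ${\rm Flat}(T)={\rm Proj}(T)$ makes $T$ left perfect, and a Bass-Theorem-P-type argument for rings with enough idempotents then forces an additive generator --- is only a sketch, which you yourself flag as ``the crux''; carrying it out rigorously would amount to reproving the pure-semisimplicity results behind \cite[Theorem 4.12]{en}, which is precisely the heavy machinery the paper avoids redoing by citing that theorem.

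Your fallback route $(c)\Rightarrow(d)$ has the same hole in a different guise: after observing that ${\underrightarrow{\lim}}\mathcal{M}={\rm Add}(\mathcal{M})$ yields functorial finiteness, generation--cogeneration, and ${\rm Add}(\mathcal{M})^{\bot_n}={\rm Add}(\mathcal{M})$, you simply assert the ``remaining identity'' ${}^{\bot_n}{\rm Add}(\mathcal{M})={\rm Add}(\mathcal{M})$. That identity is not a formality: condition $(c)$ controls the modules $X$ with ${\rm Ext}^i_{\Lambda}(\mathcal{M},X)=0$, whereas ${}^{\bot_n}{\rm Add}(\mathcal{M})$ concerns the modules $X$ with ${\rm Ext}^i_{\Lambda}(X,\mathcal{M})=0$, and nothing in your argument passes from the right-hand orthogonal to the left-hand one. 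This is exactly what the paper supplies by invoking \cite[Proposition 2.2]{I3}: for a functorially finite generating-cogenerating subcategory $\mathcal{C}$, the single condition $\mathcal{C}^{\bot_n}=\mathcal{C}$ already implies that $\mathcal{C}$ is $n$-cluster tilting. Once you insert that citation to get $(c)\Rightarrow(d)$, and then quote \cite[Theorem 4.12]{en} for $(d)\Rightarrow(a)$ (rather than as a free-floating equivalence $(a)\Leftrightarrow(d)$ that your cycle never reaches), your argument becomes complete --- and at that point it is essentially the paper's proof.
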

\begin{proof}
$(a) \Rightarrow (b)$.  Since $\mathcal{M}$ is of finite type, $\mathcal{M}$ has an additive generator. Let $\lbrace U_1, \cdots, U_m \rbrace$ be a complete set of non-isomorphic indecomposable modules in $\mathcal{M}$ and $T={\rm End}_{\Lambda}(U)$, where $U= \bigoplus_{i=1}^mU_i$. Hence $T$ is an artin algebra and so ${\rm Proj}(T)={\rm Flat}(T)$.  Let $X \in {\underrightarrow{\lim}}\mathcal{M}$. Then $X ={\underrightarrow{\lim}}M_i$, where each $M_i \in \mathcal{M}$. Since $U$ is finitely generated, ${\rm Hom}_{\Lambda}(U, X) \cong {\underrightarrow{\lim}} {\rm Hom}_{\Lambda}(U, M_i)$ as $T$-modules. But because ${\rm Hom}_{\Lambda}(U, -)$ induces an additive equivalence between the full subcategory $\mathcal{M}$ of $\Lambda$-mod and and the full subcategory ${\rm proj}(T)$ of $T$-mod with the inverse equivalence $U \otimes_T -$, we have ${\rm Hom}_{\Lambda}(U, X) \in {\rm Flat}(T)$ and so it is a projective left $T$-module. Since ${\rm Hom}_{\Lambda}(U, -): {\rm Add}(\mathcal{M}) \rightarrow {\rm Proj}(T)$ is an equivalence with the inverse equivalence $U \otimes_T -$, $U \otimes_T {\rm Hom}_{\Lambda}(U, X) \in {\rm Add}(\mathcal{M})$. But since $U \otimes_T {\rm Hom}_{\Lambda}(U, X) \cong X$, so $X \in {\rm Add}(\mathcal{M})$. Therefore ${\rm Add}(\mathcal{M})={\underrightarrow{\lim}}\mathcal{M}$. Hence by Corollary \ref{2.1.1}, $\lbrace X\in  \Lambda$-{\rm Mod}$~|~ {\rm Ext}^i_{\Lambda}(\mathcal{M},X)=0 ~~~ {\rm for ~all}~ 0<i<n \rbrace = {\rm Add}(\mathcal{M})$.\\
$(b) \Rightarrow (c)$ is clear.\\
$(c) \Rightarrow (d)$. By Corollary \ref{2.1.1} and assumption, ${\underrightarrow{\lim}}\mathcal{M} = {\rm Add}(\mathcal{M})$ and  ${{\rm Add}(\mathcal{M})}^{~{\bot}_{n}} \subseteq {\rm Add}(\mathcal{M})$. Hence by Proposition \ref{2.1} and \cite[Proposition 2.2]{I3}, the result follows.\\
$(d) \Rightarrow (a)$ follows from \cite[Theorem 4.12]{en}.
\end{proof}

\begin{Rem}{\rm 
In \cite{I3}, Iyama has been posed the following question that remains unanswered:
\begin{itemize}
\item[$\bullet$] Let $\Lambda$ be a finite dimension $K$-algebra. Does any $n$-cluster tilting subcategory of $\Lambda$-mod with $n \geq 2$ have an additive generator?
\end{itemize}
Ebrahimi and the second author in \cite{en} posed the following question, which is equivalent to the Iyama’s question. 
\begin{itemize}
\item[$\bullet$] Let $\mathcal{M}$ be an $n$-cluster tilting subcategory of $\Lambda$-mod with $n \geq 2$. Is ${\rm Add}(\mathcal{M})$  an $n$-cluster tilting subcategory of $\Lambda$-Mod?
\end{itemize}
By Theorem \ref{3.1}, this question can be restated as:
\begin{itemize}
\item[$\bullet$] Let $\mathcal{M}$ be an $n$-cluster tilting subcategory of $\Lambda$-mod with $n \geq 2$. Does every left $\Lambda$-module $X$ with ${\rm Ext}^{1,\cdots,n-1}_{\Lambda}(\mathcal{M},X)=0$ belong to ${\rm Add}(\mathcal{M})$?
\end{itemize}
}
\end{Rem}

Let $\lbrace V_{\alpha}~|~ \alpha \in A \rbrace$ be a family of left $\Lambda$-modules and $V=\bigoplus_{\alpha \in A}V_{\alpha}$. We recall from \cite[p. 497]{wi} that a left $\Lambda$-module $M$ is called {\it $V$-supported}, if $M = {\underrightarrow{\lim}} M_{\lambda}$ with each $M_{\lambda}$ is a direct sums of $V_{\alpha}$'s.\\

Let $\mathcal{M}$ be an additive full subcategory of $\Lambda$-mod which is closed under direct summands. A sequence $0 \rightarrow A \overset{f}{\rightarrow} B \overset{g}{\rightarrow} C \rightarrow 0$ in ${\underrightarrow{\lim}}\mathcal{M}$ (i.e., a pair of maps $f$ and $g$ with $fg=0$) is said to be {\it pure-exact} if the sequence $$0 \rightarrow {\rm Hom}_{\Lambda}(M, A) \overset{{\rm Hom}_{\Lambda}(M, f)}{\rightarrow} {\rm Hom}_{\Lambda}(M, B) \overset{{\rm Hom}_{\Lambda}(M, g)}{\rightarrow} {\rm Hom}_{\Lambda}(M, C) \rightarrow 0$$ is exact for each $M \in \mathcal{M}$ (see \cite[p. 1653]{wc}).\\

A homomorphism $g: M \rightarrow X$ is called {\it right minimal} if any endomorphism $h: M \rightarrow M$ such that $hg=g$ is an isomorphism. Let $\mathcal{X}$ be a full subcategory of $\Lambda$-Mod. A left $\Lambda$-module $N$ has an {\it $\mathcal{X}$-cover} if there is a right minimal right $\mathcal{X}$-approximation $X \rightarrow N$.

\begin{Rem}{\rm 
Let $\mathcal{M}$ be an $n$-cluster tilting subcategory of $\Lambda$-mod with $n \geq 2$. Motivate by Theorem \ref{3.1} (see also \cite[Proposition 4.4]{en1}), it is natural to ask whether ${\rm Add}(\mathcal{M})$ is closed under direct limits. This question is equivalent to the Iyama’s question. Let $\lbrace U_{\alpha}~|~ \alpha \in J \rbrace$ be a complete set of non-isomorphic indecomposable modules in $\mathcal{M}$ and set $U=\bigoplus_{\alpha \in J} U_{\alpha}$. Let $0 \rightarrow N' \rightarrow N \rightarrow N'' \rightarrow 0$ be a pure exact sequence, where $N \in {\rm Add}(U)$. Then by \cite[Proposition 51.9(2)]{wi}, $N'$ is a $U$-supported module and so $N' \in {\underrightarrow{\lim}}\mathcal{M}$. Hence by Proposition \ref{2.1}, ${\rm Ext}_{\Lambda}^{1}(U, N')=0$. This implies that the sequence of abelian groups 
$$0 \rightarrow {\rm Hom}_{\Lambda}(U, N') \rightarrow {\rm Hom}_{\Lambda}(U, N) \rightarrow {\rm Hom}_{\Lambda}(U, N'') \rightarrow 0$$ is exact. Therefore by using \cite[Corollary 9.6]{bp}, the Iyama’s question is equivalent to the following question.

\begin{itemize}
\item[$\bullet$] Let $\mathcal{M}$ be an $n$-cluster tilting subcategory of $\Lambda$-mod with $n \geq 2$. Does any object of ${\underrightarrow{\lim}}\mathcal{M}$ has an ${\rm Add}(\mathcal{M})$-cover? 
\end{itemize}
}
\end{Rem} 

Let $\mathcal{M}$ be an additive full subcategory of $\Lambda$-mod which is closed under direct summands.  Following \cite{Sim6, Sim7}, the category ${\underrightarrow{\lim}}\mathcal{M}$ is defined to be {\it pure semisimple} if any pure-exact sequence in ${\underrightarrow{\lim}}\mathcal{M}$ splits (see also \cite[Section 3]{wc}).

\begin{Rem}\label{2.4.1}
{\rm Let $\Delta$ be a left artinian ring and $\mathcal{M}$ be a covariantly finite additive full subcategory of $\Delta$-mod which is closed under direct summands. Then by \cite[Theorem 4.2]{wc}, $\mathcal{A}={\underrightarrow{\lim}}\mathcal{M}$ is a locally finitely presented category with products. Hence by using \cite[Proposition 3.11 and Corollary 2.7]{sp} and \cite[Lemma 4.3]{wc}, we can see that $\mathcal{A}$ is a  pure semisimple category if and only if every module in $\mathcal{A}$ is a direct sum of indecomposable modules. Equivalently, every module in $\mathcal{A}$ is a direct sum of finitely generated modules (see \cite[Subsection 2.1.1]{ab}).}
\end{Rem}

In the following theorem, we give equivalence conditions for $n$-rigidity of filtered colimits of $n$-cluster tilting subcategories of module categories. 

\begin{The}\label{2.2}
Let $\mathcal{M}$ be an $n$-cluster tilting subcategory of $\Lambda$-mod with $n \geq 2$. Then the following statements are equivalent.
\begin{itemize}
\item[$(a)$] $\mathcal{M}$ is of finite type.
\item[$(b)$] ${\underrightarrow{\lim}}\mathcal{M}$ is an $n$-cluster tilting subcategory of $\Lambda$-{\rm Mod}.
\item[$(c)$] ${\underrightarrow{\lim}}\mathcal{M}$ is an $n$-rigid subcategory of $\Lambda$-{\rm Mod}.
\item[$(d)$] ${\rm Ext}_{\Lambda}^1({\underrightarrow{\lim}}\mathcal{M}, {\underrightarrow{\lim}}\mathcal{M})=0$.
\item[$(e)$] Every pure submodule $N$ of a module $X$ in ${\rm Add}(\mathcal{M})$ is a direct summand of $X$.
\item[$(f)$] ${\underrightarrow{\lim}}\mathcal{M}$ is pure semisimple.
\end{itemize}
\end{The}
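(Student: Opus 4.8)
The plan is to establish the cycle $(a)\Rightarrow(b)\Rightarrow(c)\Rightarrow(d)\Rightarrow(e)\Rightarrow(f)\Rightarrow(a)$. The forward arc is essentially formal and uses the already-proved results. For $(a)\Rightarrow(b)$, finite type gives, via Theorem \ref{3.1} together with Corollary \ref{2.1.1}, the equality ${\underrightarrow{\lim}}\mathcal{M}={\rm Add}(\mathcal{M})$, and Theorem \ref{3.1}$(d)$ asserts that this common subcategory is $n$-cluster tilting, which is $(b)$. Then $(b)\Rightarrow(c)$ is immediate, since every $n$-cluster tilting subcategory is $n$-rigid (as noted before Proposition \ref{2.1}), and $(c)\Rightarrow(d)$ is immediate because $n\geq 2$ places the degree $k=1$ inside the rigidity range. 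The real content lies in the return arc, where purity enters.

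For $(d)\Rightarrow(e)$ I would take a pure submodule $N$ of some $X\in{\rm Add}(\mathcal{M})$ and form the pure-exact sequence $0\to N\to X\to X/N\to 0$. The submodule $N$ is $U$-supported by \cite[Proposition 51.9(2)]{wi}, hence $N\in{\underrightarrow{\lim}}\mathcal{M}$; and $X/N$, being a pure quotient of $X\in{\underrightarrow{\lim}}\mathcal{M}$, again lies in ${\underrightarrow{\lim}}\mathcal{M}$, since the equivalence $\widehat{\rm Hom}_{\Lambda}(U,-)$ restricts to an equivalence ${\underrightarrow{\lim}}\mathcal{M}\simeq{\rm Flat}(T)$ and a pure quotient of a flat module is flat. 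With both end terms in ${\underrightarrow{\lim}}\mathcal{M}$, hypothesis $(d)$ forces ${\rm Ext}^1_{\Lambda}(X/N,N)=0$, so the sequence splits and $N$ is a direct summand of $X$, giving $(e)$.

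For $(e)\Rightarrow(f)$ I would use that every $A\in{\underrightarrow{\lim}}\mathcal{M}$ admits a pure epimorphism $P\to A$ with $P\in{\rm Add}(\mathcal{M})$, coming from writing $A$ as a filtered colimit of objects of $\mathcal{M}$. Its kernel is a pure submodule of $P$, so $(e)$ splits this presentation and exhibits $A$ as a direct summand of $P$; thus every object of ${\underrightarrow{\lim}}\mathcal{M}$ is pure-projective, whence every pure-exact sequence splits and ${\underrightarrow{\lim}}\mathcal{M}$ is pure semisimple. Finally, for $(f)\Rightarrow(a)$, Remark \ref{2.4.1} turns pure semisimplicity into the statement that every module in ${\underrightarrow{\lim}}\mathcal{M}$ is a direct sum of finitely generated modules; since a finitely generated module in ${\underrightarrow{\lim}}\mathcal{M}$ already lies in $\mathcal{M}$ (by Corollary \ref{2.1.1} and $\mathcal{M}=\mathcal{M}^{\bot_n}$), this forces ${\underrightarrow{\lim}}\mathcal{M}={\rm Add}(\mathcal{M})$, which by Corollary \ref{2.1.1} is precisely condition $(b)$ of Theorem \ref{3.1} and hence finite type.

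The main obstacle is the step $(d)\Rightarrow(e)$. Hypothesis $(d)$ is deliberately weak, asserting only $\mathrm{Ext}^1$-vanishing rather than full $n$-rigidity, so the entire argument hinges on certifying that the pure quotient $X/N$ stays inside ${\underrightarrow{\lim}}\mathcal{M}$, which is what makes $(d)$ applicable. Establishing that ${\underrightarrow{\lim}}\mathcal{M}$ is closed under pure quotients, either through the identification with ${\rm Flat}(T)$ and the closure of flats under pure quotients, or by a direct purity computation, is the technical heart of the proof; once it is in place the splitting is automatic. The only remaining care is bookkeeping: one must check that the notion of purity used in the pure-projective presentation for $(e)\Rightarrow(f)$ agrees with the one in the definition of pure-exactness, and that the equivalence ${\underrightarrow{\lim}}\mathcal{M}\simeq{\rm Flat}(T)$ is compatible with purity.
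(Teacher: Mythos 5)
Your proposal is correct and follows the same cycle $(a)\Rightarrow(b)\Rightarrow(c)\Rightarrow(d)\Rightarrow(e)\Rightarrow(f)\Rightarrow(a)$ and the same purity mechanism as the paper's proof; the differences are local, but worth recording. In $(d)\Rightarrow(e)$, what you call the technical heart — closure of ${\underrightarrow{\lim}}\mathcal{M}$ under pure quotients — is exactly what the paper's single citation delivers: \cite[Proposition 51.9(2)]{wi} states that pure submodules \emph{and} pure quotients of $U$-supported modules are $U$-supported, so the reference you already invoke for $N$ handles $X/N$ as well, and the detour through ${\underrightarrow{\lim}}\mathcal{M}\simeq{\rm Flat}(T)$ is unnecessary. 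That detour is also costlier than you suggest: the paper only establishes ${\rm Add}(\mathcal{M})\simeq{\rm Proj}(T)$, so on your route you would have to prove the restricted equivalence with ${\rm Flat}(T)$ (true, because $\widehat{{\rm Hom}}_{\Lambda}(U,-)=\bigoplus_{\alpha}{\rm Hom}_{\Lambda}(U_{\alpha},-)$ commutes with direct limits), transfer purity to $T{\rm Mod}$, and — a step you do not mention — show that the counit $U\otimes_T\widehat{{\rm Hom}}_{\Lambda}(U,X/N)\to X/N$ is an isomorphism (for instance by comparing cokernels using the counits of $N$ and $X$, which are isomorphisms), since flatness of $\widehat{{\rm Hom}}_{\Lambda}(U,X/N)$ alone only puts the tensor product in ${\underrightarrow{\lim}}\mathcal{M}$, not $X/N$ itself. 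In $(e)\Rightarrow(f)$ both proofs start from the pure epimorphism $\bigoplus_{\alpha}M_{\alpha}\to X$ of \cite[Lemma 4.1]{wc}; your conclusion via pure-projectivity (every object of ${\underrightarrow{\lim}}\mathcal{M}$ becomes a summand of an object of ${\rm Add}(\mathcal{M})$, so every pure-exact sequence splits) is arguably cleaner than the paper's, which passes through Remark \ref{2.4.1} and implicitly uses that a direct summand of a direct sum of finitely generated modules is again such a direct sum. In $(f)\Rightarrow(a)$ you genuinely diverge: the paper quotes \cite[Corollary 4.6]{fin}, whereas your argument (a finitely generated object of ${\underrightarrow{\lim}}\mathcal{M}$ lies in $\mathcal{M}$ by Corollary \ref{2.1.1} together with $\mathcal{M}^{\bot_n}=\mathcal{M}$, whence ${\underrightarrow{\lim}}\mathcal{M}={\rm Add}(\mathcal{M})$ and Theorem \ref{3.1} applies) is self-contained and correct, making the theorem independent of \cite{fin}. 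Finally, the purity bookkeeping you flag is real but shared with the paper's own write-up: both proofs need that the $\mathcal{M}$-relative purity of \cite{wc} agrees with ordinary purity in $\Lambda$-Mod for sequences with terms in ${\underrightarrow{\lim}}\mathcal{M}$; this follows from Lenzing's factorization criterion (every map from a finitely presented module into an object of ${\underrightarrow{\lim}}\mathcal{M}$ factors through $\mathcal{M}$), and neither text spells it out.
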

\begin{proof}
$(a) \Rightarrow (b)$. Since $\mathcal{M}$ is of finite type, by Corollary \ref{2.1.1} and Theorem \ref{3.1},  ${\rm Add}(\mathcal{M})={\underrightarrow{\lim}}\mathcal{M}$ and ${\rm Add}(\mathcal{M})$ is an $n$-cluster tilting subcategory of $\Lambda$-{\rm Mod}. Hence ${\underrightarrow{\lim}}\mathcal{M}$ is an $n$-cluster tilting subcategory of $\Lambda$-{\rm Mod}.\\     
$(b) \Rightarrow (c)$ and  $(c) \Rightarrow (d)$ are clear.\\
$(d) \Rightarrow (e)$. Let $X$ be a module in ${\rm Add}(\mathcal{M})$ and $N$ be a pure submodule of $X$. We show that $N$ is a direct summand of $X$. Consider the pure exact sequence $$0 \rightarrow N \rightarrow X \rightarrow X/N \rightarrow 0.$$ 
Let $\lbrace U_{\alpha}~|~ \alpha \in J \rbrace$ be a complete set of representative of the isomorphic classes of indecomposable modules in $\mathcal{M}$ and $U=\bigoplus _{\alpha \in J} U_{\alpha}$. Since $X$ is $U$-supported, by \cite[Proposition 51.9(2)]{wi}, $N$ and $X/N$ are $U$-supported modules. Since by \cite[Theorem 4.1]{wc}, ${\underrightarrow{\lim}}\mathcal{M}$ is an additive category with direct limit, $N$ and $X/N$ are modules in ${\underrightarrow{\lim}}\mathcal{M}$. Since ${\rm Ext}_{\Lambda}^1({\underrightarrow{\lim}}\mathcal{M}, {\underrightarrow{\lim}}\mathcal{M})=0$, the exact sequence $0 \rightarrow N \rightarrow X \rightarrow X/N \rightarrow 0$ is split and so the result follows.\\
$(e) \Rightarrow (f)$. By Remark \ref{2.4.1}, it is enough to show that every module in ${\underrightarrow{\lim}}\mathcal{M}$ is a direct sum of finitely generated modules. Let $X$ be a module in ${\underrightarrow{\lim}}\mathcal{M}$. Then by \cite[Lemma 4.1]{wc}, there exists a pure epimorphism $\varphi: \bigoplus_{\alpha \in A}M_{\alpha} {\rightarrow} X$ with each $M_{\alpha} \in \mathcal{M}$. This implies that ${\rm Ker}(\varphi)$ is a pure submodule of $\bigoplus_{\alpha \in A}M_{\alpha}$. Hence by assumption, $X$ is a direct sum of finitely generated modules and so the result follows.\\
$(f) \Rightarrow (a)$ follows from \cite[Corollary 4.6]{fin}.
\end{proof}

\begin{Rem}\label{2.3}{\rm 
Let $\mathcal{M}$ be an $n$-cluster tilting subcategory of $\Lambda$-mod with $n \geq 2$. Suppose that ${(M_i, \varphi_{ij})_{i,j \in I}}$ is a direct system in $\mathcal{M}$. Consider the following property
\begin{itemize}
\item[$(\ast)$] For each family $\lbrace \psi_{ij}: M_i \rightarrow N~|~i < j \rbrace$ of morphisms with $N \in  {\underrightarrow{\lim}}\mathcal{M}$ such that 
\begin{center}
$\psi_{ik} = \psi_{ij} + \varphi_{ij}\psi_{jk}$ \hspace{1cm} for each $i, j, k \in I$ with $i < j< k$,
\end{center}
 there exists a family $\lbrace \psi_i: M_i \rightarrow N~|~i \in I \rbrace$ of morphisms such that 
 \begin{center}
 $\psi_i=\psi_{ij} + \varphi_{ij}\psi_j$ \hspace{1cm} for each $i, j \in I$ with $i < j$.
 \end{center}
\end{itemize}
By Theorem \ref{2.2} and \cite[Lemma 2.3]{sa}, the Iyama’s question equivalent to the following question:
\begin{itemize}
\item Let $\mathcal{M}$ be an $n$-cluster tilting subcategory of $\Lambda$-mod with $n \geq 2$.  Does any direct system ${(M_i, \varphi_{ij})_{i,j \in I}}$ in $\mathcal{M}$ have the property $(\ast)$?
\end{itemize}
}
\end{Rem} 

A left $\Lambda$-module $M$ is called {\it Mittag-Leffler} if the canonical map $$ (\prod_{i \in I} Q_i) \otimes_{\Lambda} M \rightarrow \prod_{i \in I} (Q_i \otimes_{\Lambda}M)$$ is injective, where $\lbrace Q_i~ |~ i \in I\rbrace$ are arbitrary right $\Lambda$-modules (see \cite[p. 19]{az}). A direct system $(M_j, \varphi_{ij})_{i, j \in I}$ of left $\Lambda$-module is called {\it factorizable} if for every $i \in I$ there exists $j \geq i$ such that $\varphi_{ij}$ factors through each $\varphi_{ik}$ with $k \geq i$. Let $(M_i, \varphi_{ij})_{i, j \in I}$ be a direct system of finitely generated left $\Lambda$-modules and $M={\underrightarrow{\lim}} M_i$. Then by \cite[Propositions 2.1.4 and 2.1.5]{rg}, $M$ is Mittag-Leffler if and only if $(M_j, \varphi_{ij})_{i, j \in I}$ is factorizable.  For artin algebras, it is known that Mittag–Leffler modules coincide with locally pure projective modules (see \cite[p. 3]{aht} and also \cite[Theorem 5]{az}). Recall that a left $\Lambda$-module $N$ is called {\it locally pure projective} if any pure epimorphism $g: Y \rightarrow N$ is locally split, that is, for each $x \in N$ there is a homomorphism $\varphi=\varphi_{x}: N \rightarrow Y$ such that $x = (x)\varphi g$.\\  

Let $\mathcal{M}$ be a skeletally small additive full subcategory of $\Lambda$-mod which is closed under direct summands. Let $\lbrace M_i~|~i \in I\rbrace$ be a complete set of isomorphism classes of objects in $\mathcal{M}$ and put $M= \bigoplus_{i \in I}M_i$. We denote by $\mathscr{G}(M)$ the category of all left $\Lambda$-modules $N$ which admit a locally split epimorphism $g: M^{(K)} \rightarrow N$ for some set $K$. By using \cite[Lemma 2.1]{anh}, we can see that ${\rm Add}(\mathcal{M}) \subseteq \mathscr{G}(M)$. Moreover $\mathscr{G}(M)$ consists of all modules in ${\underrightarrow{\lim}}\mathcal{M}$ which are locally pure projective (see \cite[Proposition 2.3]{anh}).

\begin{Rem}\label{3.3}{\rm 
Let $\mathcal{M}$ be an $n$-cluster tilting subcategory of $\Lambda$-mod with $n \geq 2$ and $\lbrace U_{\alpha}~|~\alpha \in J\rbrace$ be a complete set of isomorphism classes of objects in $\mathcal{M}$. Therefore ${\rm Add}(\mathcal{M}) \subseteq \mathscr{G}(U) \subseteq {\underrightarrow{\lim}}\mathcal{M}$, where $U= \bigoplus_{\alpha \in J}U_{\alpha}$. The locally finitely presented category ${\underrightarrow{\lim}}\mathcal{M}$ is a covariantly finite subcategory of $\Lambda$-Mod and also it is closed under direct product (see \cite[Proposition 3.11]{sp} and \cite[Theorem 4.2]{wc}). By \cite[Corollary 3.5]{ra}, ${\rm Add}(\mathcal{M})$ is a covariantly finite subcategory of $\Lambda$-Mod if and only if ${\rm Add}(\mathcal{M})$ is closed under direct products. Note that Rothmaler in \cite{p} studied direct products of Mittag-Leffler modules by using tools in model theory. He showed that all reduced products of Mittag-Leffler left $\Lambda$-modules are Mittag-Leffler if and only if all left $\Lambda$-modules are Mittag-Leffler if and only if $\Lambda$  is left pure-semisimple (see \cite[Corollary 3.2]{p}). Hence one may ask when $\mathscr{G}(U)$ is a covariantly finite subcategory of $\Lambda$-Mod. Also, it is natural to ask whether $\mathscr{G}(U)$ is closed under direct products. In particular, it is natural and interesting to ask the following question:
\begin{itemize}
\item[$\bullet$] Let $\mathcal{M}$ be an $n$-cluster tilting subcategory of $\Lambda$-mod with $n \geq 2$. Is any direct system in $\mathcal{M}$ factorizable?
\end{itemize}
}
\end{Rem}

As a consequence of Theorem \ref{2.2} and \cite[Theorems 3.4, 4.4, 5.1 and  5.2]{anh}, we have the following result that shows the questions in Remark \ref{3.3} are equivalent to the Iyama’s question.

\begin{Cor}\label{2.3}
Let $\mathcal{M}$ be an $n$-cluster tilting subcategory of $\Lambda$-mod with $n \geq 2$ and $\lbrace U_{\alpha}~|~\alpha \in J\rbrace$ be a complete set of isomorphism classes of objects of $\mathcal{M}$. Put $U= \bigoplus_{\alpha \in J}U_{\alpha}$. Then the following statements are equivalent.
\begin{itemize}
\item[$(a)$] $\mathcal{M}$ is of finite type.
\item[$(b)$] All modules in ${\underrightarrow{\lim}}\mathcal{M}$ are Mittag-Leffler modules.
\item[$(c)$] $\mathscr{G}(U)$ is closed under direct products.
\item[$(d)$] $\mathscr{G}(U)$ is closed under direct limits.
\item[$(e)$] $\mathscr{G}(U)$ is a covariantly finite subcategory of $\Lambda$-Mod.
\item[$(f)$] Every pure submodule $N$ of a module $X \in {\rm Add}(\mathcal{M})$ is a direct summand of $X$.
\end{itemize}
\end{Cor}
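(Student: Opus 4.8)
The plan is to separate $(a)\Leftrightarrow(f)$ from the four conditions involving $\mathscr{G}(U)$, and then to let \cite{anh} supply the link between the two clusters. The first observation is that condition $(f)$ is literally condition $(e)$ of Theorem \ref{2.2}; hence Theorem \ref{2.2} already delivers $(a)\Leftrightarrow(f)$ and identifies both with the pure semisimplicity of ${\underrightarrow{\lim}}\mathcal{M}$. It therefore remains to incorporate $(b)$, $(c)$, $(d)$, $(e)$, which speak about the intermediate category $\mathscr{G}(U)$. Throughout I would keep in play the inclusions ${\rm Add}(\mathcal{M})\subseteq\mathscr{G}(U)\subseteq{\underrightarrow{\lim}}\mathcal{M}$ of Remark \ref{3.3}, together with the fact that ${\underrightarrow{\lim}}\mathcal{M}$ is locally finitely presented, covariantly finite and closed under direct products (Proposition \ref{2.1} and Remark \ref{3.3}); these are exactly the standing hypotheses under which the cited theorems of \cite{anh} operate, so a preliminary step is to check that our $n$-cluster tilting setting supplies them, which it does by \cite[Theorem 4.2]{wc} and \cite[Proposition 3.11]{sp}.

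For the implications emanating from $(a)$ I would argue uniformly. If $\mathcal{M}$ is of finite type then Theorem \ref{3.1} gives ${\rm Add}(\mathcal{M})={\underrightarrow{\lim}}\mathcal{M}$, and the sandwich above collapses to a single category $\mathscr{G}(U)={\rm Add}(\mathcal{M})={\underrightarrow{\lim}}\mathcal{M}$. From this identity each of the four conditions follows at once: $(b)$ because every object of $\mathscr{G}(U)$ is locally pure projective and, over an artin algebra, locally pure projective modules are precisely the Mittag-Leffler ones; $(d)$ and $(e)$ because ${\underrightarrow{\lim}}\mathcal{M}$ is closed under direct limits and covariantly finite; and $(c)$ because a covariantly finite ${\rm Add}(\mathcal{M})$ is closed under direct products by \cite[Corollary 3.5]{ra}.

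The heart of the statement is the bundle of converses, where I would call on \cite[Theorems 3.4, 4.4, 5.1 and 5.2]{anh}. The key reformulation is that $(b)$ is nothing but the equality $\mathscr{G}(U)={\underrightarrow{\lim}}\mathcal{M}$, since $\mathscr{G}(U)$ consists exactly of the locally pure projective (equivalently, Mittag-Leffler) modules in ${\underrightarrow{\lim}}\mathcal{M}$. The four theorems of \cite{anh} then say that each of the weaker closure demands $(c)$, $(d)$, $(e)$ on $\mathscr{G}(U)$ already forces this equality, yielding $(c)\Rightarrow(b)$, $(d)\Rightarrow(b)$ and $(e)\Rightarrow(b)$, and that the same circle of ideas returns $(b)$ to the splitting condition $(f)$. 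Combined with $(a)\Leftrightarrow(f)$ from Theorem \ref{2.2}, this closes the cycle $(a)\Rightarrow(b),(c),(d),(e)\Rightarrow(b)\Rightarrow(f)\Rightarrow(a)$.

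I expect the genuine difficulty to reside entirely in these converse implications: upgrading a soft closure property of $\mathscr{G}(U)$ (closure under products, closure under direct limits, or covariant finiteness) into the rigid conclusion that $\mathscr{G}(U)$ exhausts all of ${\underrightarrow{\lim}}\mathcal{M}$. This upgrade is precisely the content imported from \cite{anh}, and the one point demanding care is to confirm that the general hypotheses of those theorems --- skeletal smallness of $\mathcal{M}$, closure under direct summands, and local finite presentability of ${\underrightarrow{\lim}}\mathcal{M}$ --- are legitimately in force in the $n$-cluster tilting situation; all of these are guaranteed by the preliminary results recalled above.
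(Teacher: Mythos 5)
Your proposal is correct and takes essentially the same route as the paper: the paper's entire proof is to combine Theorem \ref{2.2} (whose condition $(e)$ is literally condition $(f)$ here, giving $(a)\Leftrightarrow(f)$) with the cited results \cite[Theorems 3.4, 4.4, 5.1 and 5.2]{anh} for the conditions on $\mathscr{G}(U)$, exactly as you do. Your added details --- collapsing ${\rm Add}(\mathcal{M})=\mathscr{G}(U)={\underrightarrow{\lim}}\mathcal{M}$ under $(a)$ via Theorem \ref{3.1}, and reading $(b)$ as the equality $\mathscr{G}(U)={\underrightarrow{\lim}}\mathcal{M}$ through the identification of Mittag-Leffler with locally pure projective modules over an artin algebra --- are correct elaborations of what the paper leaves implicit.
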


\section*{acknowledgements}
The work of the authors is based upon research funded by Iran National Science Foundation (INSF) under project No. 4001480. Also, the research of the second author was in part supported by a grant from IPM (No. 1403160416). The research of the first author was in part carried out in IPM-Isfahan Branch.


\begin{thebibliography}{10}
\bibitem{ai} T. Aihara, T. Araya, O. Iyama, R.  Takahashi, and M. Yoshiwaki, {\it Dimensions of triangulated categories with respect to subcategories}, J. Algebra 399 (2014), 205--219.
\bibitem{anh} L. Angeleri-Hugel, {\it Covers and envelopes via endoproperties of modules,} Proc. Lond. Math. Soc., III. Ser. 86 (2003), no. 3, 649--665.
\bibitem{aht} L. Angeleri-Hugel, D. Herbera, and J. Trlifaj, {\it Baer and Mittag-Leffler modules over tame hereditary algebras,} Math. Z. 265 (2010), no. 1, 1--19. 
\bibitem{Ausla2} M. Auslander, {\it Representation theory of artin algebras II}, Commun. Algebra 1 (1974), 269--310.
\bibitem{ar} M. Auslander and I. Reiten, {\it Applications of contravariantly finite subcategories}, Adv. Math. 86 (1991), no. 1, 111--152.
\bibitem{as} M. Auslander and S. O. Smal$\varnothing$, {\it Almost split sequences in subcategories}, J. Algebra 69 (1981), 426--454.
\bibitem{az} G. Azumaya, {\it Locally pure-projective modules,} Contemp. Math. 124 (1992), 17-22.
\bibitem{af} G. Azumaya and A. Facchini, {\it Rings of pure global dimension zero and Mittag-Leffler modules,} J. Pure Appl. Algebra 62 (1989), no. 2, 109--122. 
\bibitem{bp} S. Bazzoni and L. Positselski, {\it Covers and direct limits: a contramodule-based approach,} Math. Z. 299 (2021), no. 1-2, 1--52.
\bibitem{ab} A. Beligiannis, {\it On algebras of finite Cohen-Macaulay type,} Adv. Math. 226 (2011), no. 2, 1973--2019.
\bibitem{b} A. Beligiannis, {\it Relative homology, higher cluster-tilting theory and categorified Auslander-Iyama
correspondence,} J. Algebra, 444 (2015), 367--503.
\bibitem{wc}  W. Crawley-Boevey, {\it Locally finitely presented additive categories,} Commun. Algebra 22 (1994), no. 5, 1641--1674.
\bibitem{de} A. Del Rio, {\it Weak dimension of group-graded rings,} Publ. Mat., Barc. 34 (1990), no. 1, 209--216.
\bibitem{dn}  R. Diyanatnezhad and A. Nasr-Isfahani, {\it Relations for Grothendieck groups of $n$-cluster tilting subcategories,}
J. Algebra, 594 (2022), 54--73.
\bibitem{dn1} R. Diyanatnezhad and A. Nasr-Isfahani, {\it The radical of functorially finite subcategories,} J. Algebra, 657 (2024), 675--703.
\bibitem{en} R. Ebrahimi and A. Nasr-Isfahani, {\it Pure semisimple n-cluster tilting subcategories,} J. Algebra, 549 (2020), 177--194.
\bibitem{en1} R. Ebrahimi and A. Nasr-Isfahani, {\it The completion of d-abelian categories,} J. Algebra 645 (2024), 143--163. 
\bibitem{e} E. E. Enochs, O. M. G. Jenda, {\it Relative Homological Algebra,} de Gruyter Exp. Math., vol. 30, Walter de Gruyter and Co., Berlin, 2000, xii+339 pp.
\bibitem{fin} Z. Fazelpour and A. Nasr-Isfahani, {\it Finiteness and purity of subcategories of the module categories}, arXiv:2203.03294.
\bibitem{zn}  Z. Fazelpour and A. Nasr-Isfahani, {\it Morita equivalence and Morita duality for rings with local units and the subcategory of projective unitary modules}, Appl. Categ. Struct. 32 (2024), no. 2, Paper no. 10, 28 p. 
\bibitem{fu} K. R. Fuller, {\it On rings whose left modules are direct sums of finitely generated modules}, Proc. Am. Math. Soc. 54 (1976), 39--44.
\bibitem{f} K. R. Fuller and H. Hullinger, {\it Rings with finiteness conditions and their categories of functors}, J. Algebra 55 (1978), 94--105.
\bibitem{gs} J. L. Garcia and D. Simson, {\it On rings whose flat modules form a Grothendieck category,} Colloq.
Math. 73 (1997), no. 1, 115-141.
\bibitem{gro} A. Grothendieck, {\it Éléments de géométrie algébrique. III: Étude cohomologique des faisceaux cohérents. (Séconde partie). Rédigé avec la colloboration de J. Dieudonné,} Publ. Math., Inst. Hautes Étud. Sci. 17 (1963), 137--223.
\bibitem{rg} L. Gruson and M. Raynaud, {\it Critères de platitude et de projectivité. Techniques de 'platification' d'un module,} Invent. Math. 13 (1971), 1-89.
\bibitem{I3} O. Iyama, {\it Auslander-Reiten theory revisited,} In Trends in Representation Theory
of Algebras and Related Topics, {} (2008), 349--397.
\bibitem{I2} O. Iyama, {\it Higher-dimensional Auslander-Reiten theory on maximal orthogonal subcategories,} Adv. Math., 210 (2007), no. 1, 22--50.
\bibitem{J} G. Jasso, {\it n-Abelian and n-exact categories,} Math. Z, 283 (2016), no. 3-4 , 703-759.
\bibitem{sp} H. Krause, {\it The spectrum of a module category},  Mem. Am. Math. Soc. 707 (2001), x+125 pp.
\bibitem{men} C. Menini, {\it Gabriel-Popescu Type Theorems and Graded Modules,} Perspectives in ring theory, Proc. NATO Adv. Res. Workshop, Antwerp/Belg. 1987, NATO ASI Ser., Ser. C 233 (1988), 239-251.
\bibitem{mp} M. Prest, {\it Purity, Spectra and Localisation,} Encyclopedia of Mathematics and its Applications, vol. 121, Cambridge University Press, Cambridge, 2009.
\bibitem{ra} J. Rada and M. Saorin, {\it Rings characterized by (pre)envelopes and (pre)covers of their modules,} Commun. Algebra 26 (1998), no. 3, 899--912.
\bibitem{p} P. Rothmaler, {\it Mittag-Leffler modules,} Ann. Pure Appl. Logic 88 (1997), no. 2-3, 227--239.
\bibitem{r} J. Rotman, {\it An Introduction to Homological Algebra,} Springer, New York, second edition, 2009.
\bibitem{sa} J. Saroch and J. Stovicek, {\it Singular compactness and definability for $\sum$-cotorsion and Gorenstein modules,} Sel. Math., New Ser. 26 (2020), no. 2, Paper no. 23, 40 p.
\bibitem{s77} D. Simson, {\it On pure global dimension of locally finitely presented Grothendieck categories}, Fundam. Math. 96 (1977), 91--116.
\bibitem{Sim6} D. Simson, {\it Pure semisimple categories and rings of finite representation type}, J. Algebra 48 (1977), 290--295.
\bibitem{Sim7} D. Simson, {\it Pure semisimple categories and rings of finite representation type, Corrigendum}, J. Algebra 67(1) (1980), 254--256.
\bibitem{wi} R. Wisbauer,  {\it Foundations of module and ring theory},  A handbook for study and research. Revised and translated from the 1988 German edition. Algebra, Logic and Applications, 3. Gordon and 3. Gordon and Breach Science Publishers, Philadelphia, PA, 1991.
\bibitem{y} K. Yamagata, {\it On Morita duality for additive group valued functors}, Commun. Algebra 7 (1979), 367-392.
\end{thebibliography}
\end{document}